
\documentclass[11pt]{amsart}%
\usepackage{amsfonts}
\usepackage{amssymb}
\usepackage{amsthm,amsmath}
\usepackage{url}
\usepackage{amsmath}
\usepackage{graphicx}%
\setcounter{MaxMatrixCols}{30}
\providecommand{\U}[1]{\protect\rule{.1in}{.1in}}
\theoremstyle{plain}
\newtheorem{theorem}{Theorem}
\newtheorem{lemma}[theorem]{Lemma}
\newtheorem{proposition}[theorem]{Proposition}

\newtheorem*{interpolationproblem}{Interpolation Problem}

\theoremstyle{definition}

\newtheorem{example}[theorem]{Example}
\theoremstyle{remark}

\begin{document}
\title[Generalization of Goodstein's theorem]{A generalization of Goodstein's theorem: interpolation by polynomial functions
of distributive lattices}
\author{Miguel Couceiro}
\address[Miguel Couceiro]{Mathematics Research Unit, FSTC, University of Luxembourg \\
6, rue Coudenhove-Kalergi, L-1359 Luxembourg, Luxembourg}
\email{miguel.couceiro[at]uni.lu }
\author{Tam\'as Waldhauser}
\address[Tam\'as Waldhauser]{Mathematics Research Unit, FSTC, University of Luxembourg \\
6, rue Coudenhove-Kalergi, L-1359 Luxembourg, Luxembourg, and Bolyai
Institute, University of Szeged, Aradi v\'{e}rtan\'{u}k tere 1, H-6720 Szeged, Hungary}
\email{twaldha@math.u-szeged.hu}

\begin{abstract}
We consider the problem of interpolating functions partially defined over a
distributive lattice, by means of lattice polynomial functions. Goodstein's
theorem solves a particular instance of this interpolation problem on a
distributive lattice $L$ with least and greatest elements $0$ and $1$, resp.:
Given a function $f\colon\{0,1\}^{n}\to L$, there exists a lattice polynomial
function $p\colon L^{n}\to L$ such that $p|_{\{0,1\}^{n}}=f$ if an only if $f$
is monotone; in this case, the interpolating polynomial $p$ is unique.

We extend Goodstein's theorem to a wider class of partial functions $f\colon
D\to L$ over a distributive lattice $L$, not necessarily bounded, and where
$D\subseteq L^{n}$ is allowed to range over cuboids $D=\left\{  a_{1}%
,b_{1}\right\}  \times\cdots\times\left\{  a_{n},b_{n}\right\}  $ with
$a_{i},b_{i}\in L$ and $a_{i}<b_{i}$, and determine the class of such partial
functions which can be interpolated by lattice polynomial functions. In this
wider setting, interpolating polynomials are not necessarily unique; we
provide explicit descriptions of all possible lattice polynomial functions
which interpolate these partial functions, when such an interpolation is available.

\end{abstract}
\maketitle

\section{Introduction}

Let $L$ be a distributive lattice and let $f\colon D\rightarrow L~\left(
D\subseteq L^{n}\right)  $ be an $n$-ary partial function on $L$. In this
paper we are interested in the problem of extending such partial functions to
the whole domain $L^{n}$ by means of lattice polynomial functions, i.e.,
functions that can be represented as compositions of the lattice operations
$\wedge$ and $\vee$ and constants. More precisely, we aim at determining
necessary and sufficient conditions on the partial function $f$ that guarantee
the existence of a lattice polynomial function $p\colon L^{n}\rightarrow L$
which interpolates $f$, that is, $p|_{D}=f$.

An instance of this problem was considered by Goodstein \cite{Goo67} in the
case when $L$ is a bounded distributive lattice, and the functions to be
interpolated were of the form $f\colon\left\{  0,1\right\}  ^{n}\rightarrow
L$. Goodstein showed that such a function $f$ can be interpolated by lattice
polynomial functions if and only if it is monotone. Furthermore, if such an
interpolating polynomial function exists, then it is unique.

The general solution to the above mentioned interpolation problem eludes us.
However, we are able to generalize Goodstein's result by allowing $L$ to be an
arbitrary (possibly unbounded) distributive lattice and considering functions
$f\colon D\rightarrow L$, where $D=\left\{  a_{1},b_{1}\right\}  \times
\cdots\times\left\{  a_{n},b_{n}\right\}  $ with $a_{i},b_{i}\in L$ and
$a_{i}<b_{i}$. More precisely, we furnish necessary and sufficient conditions
for the existence of an interpolating polynomial function. As it will become
clear, in this more general setting, uniqueness is not guaranteed, and thus we
determine all possible interpolating polynomial functions.

The structure of the paper is as follows. In Section~\ref{sect preliminaries}
we recall basic background on polynomial functions over distributive lattices
(for general background see \cite{DavPri,Grae03}) and formalize the
interpolation problem that we are interested in. In Section~\ref{sect main} we
state and prove the characterization of those functions that can be
interpolated by polynomial functions and we describe the set of all solutions
of the interpolation problem. We discuss variations of the interpolation
problem in Section~\ref{sect concluding remarks} and relate our work to
earlier results obtained for finite chains in \cite{RG}. Finally, in
Section~\ref{sect decision} we consider potential applications of our results
in mathematical modeling of decision making.

\section{Preliminaries\label{sect preliminaries}}

Let $L$ be a bounded distributive lattice with least element $0$ and greatest
element $1$. It can be shown that a function $p\colon L^{n}\rightarrow L$ is a
lattice polynomial function if and only if there exist $c_{I}\in L$,
$I\subseteq\left[  n\right]  :=\left\{  1,\ldots,n\right\}  $, such that $p$
can be represented in \emph{disjunctive normal form} (DNF\ for short) by%
\begin{equation}
p\left(  \mathbf{x}\right)  =\bigvee_{I\subseteq\left[  n\right]  }%
\bigl(c_{I}\wedge\bigwedge_{i\in I}x_{i}\bigr),\,\text{ where $\mathbf{x}%
=\left(  x_{1},\ldots,x_{n}\right)  \in L^{n}$. } \label{eq DNF}%
\end{equation}
It is easy to verify that taking $c_{I}^{\prime}=\bigvee_{J\subseteq I}c_{J}
$, we also have%
\[
p\left(  \mathbf{x}\right)  =\bigvee_{I\subseteq\left[  n\right]  }%
\bigl(%
c_{I}^{\prime}\wedge\bigwedge_{i\in I}x_{i}%
\bigr)%
,
\]
and hence the coefficients $c_{I}$ can be assumed to be monotone in the sense
that $I\subseteq J$ implies $c_{I}\leq c_{J}$. This monotonicity assumption
allows us to recover the coefficients of the DNF from certain values of the
polynomial function $p$. Indeed, denoting by $\mathbf{1}_{I}$ the
characteristic vector of $I\subseteq\left[  n\right]  $ (i.e., the tuple
$\mathbf{1}_{I}\in L^{n}$ whose $i$-th component is $1$ if $i\in I$ and $0$ if
$i\notin I$), we then have that $p\left(  \mathbf{1}_{I}\right)  =c_{I}$. In
the sequel, we will always consider lattice polynomials in DNF, and we will
implicitly assume that the coefficients are monotone. These observations
contain the essence of Goodstein's theorem.

\begin{theorem}
[Goodstein \cite{Goo67}]\label{thm goodstein}Let $L$ be a bounded distributive
lattice, and let $f$ be a function $f\colon\left\{  0,1\right\}
^{n}\rightarrow L$. There exists a polynomial function $p$ over $L$ such that
$p|_{\left\{  0,1\right\}  ^{n}}=f $ if and only if $f$ is monotone. In this
case $p$ is uniquely determined, and can be represented by the DNF
\[
p\left(  \mathbf{x}\right)  =\bigvee_{I\subseteq\left[  n\right]  }
\bigl(f\left(  \mathbf{1}_{I}\right)  \wedge\bigwedge_{i\in I}x_{i}%
\bigr)\text{.}%
\]

\end{theorem}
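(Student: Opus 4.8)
The plan is to prove both directions by leaning on the disjunctive normal form recalled in the preliminaries, together with the normalization that the coefficients may be taken monotone. For the ``only if'' direction, I would first note that the lattice operations $\wedge$ and $\vee$ are order-preserving in each argument, so every lattice polynomial function $p$, being a composition of these operations with constants, is monotone on $L^{n}$. Since $\left\{ 0,1\right\} ^{n}$ carries the order induced from $L^{n}$, and $\mathbf{1}_{I}\leq\mathbf{1}_{J}$ holds precisely when $I\subseteq J$, the restriction $f=p|_{\left\{ 0,1\right\} ^{n}}$ is monotone as well. This settles necessity.

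For sufficiency, assume $f$ is monotone and define $p$ by the DNF with coefficients $c_{I}:=f(\mathbf{1}_{I})$, that is, $p(\mathbf{x})=\bigvee_{I\subseteq\left[ n\right] }\bigl(f(\mathbf{1}_{I})\wedge\bigwedge_{i\in I}x_{i}\bigr)$. Monotonicity of $f$ guarantees $c_{I}\leq c_{J}$ whenever $I\subseteq J$, so these coefficients satisfy the standing convention. The core computation is to evaluate $p$ at an arbitrary characteristic vector $\mathbf{1}_{J}$: for each $I$, the meet $\bigwedge_{i\in I}(\mathbf{1}_{J})_{i}$ equals $1$ when $I\subseteq J$ (every coordinate read off is $1$, the empty meet being $1$ as well) and equals $0$ otherwise (some coordinate read off is $0$, and $0$ is absorbing for $\wedge$). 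Hence $p(\mathbf{1}_{J})=\bigvee_{I\subseteq J}c_{I}$, and since $c_{I}\leq c_{J}$ for all $I\subseteq J$, this join collapses to its top term $c_{J}=f(\mathbf{1}_{J})$. Thus $p|_{\left\{ 0,1\right\} ^{n}}=f$, and $p$ has the claimed form.

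Uniqueness then follows from the observation in the preliminaries that any polynomial function $q$ admits a DNF with monotone coefficients $c_{I}'$ and that these satisfy $q(\mathbf{1}_{I})=c_{I}'$. If $q|_{\left\{ 0,1\right\} ^{n}}=f$, then $c_{I}'=q(\mathbf{1}_{I})=f(\mathbf{1}_{I})=c_{I}$ for all $I\subseteq\left[ n\right] $, so $q$ and the $p$ built above have the same coefficients and therefore represent the same function. I do not expect a genuine obstacle: once the DNF representation and the monotonization of coefficients are granted (as recalled in Section~\ref{sect preliminaries}), the only real content is the bookkeeping behind $p(\mathbf{1}_{J})=\bigvee_{I\subseteq J}c_{I}$ and the remark that monotone coefficients make this join trivial; distributivity of $L$ is used only through the existence of a DNF and nowhere else in the argument.
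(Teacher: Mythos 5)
Your proposal is correct and follows essentially the same route as the paper, which does not spell out a formal proof but explicitly states that the preliminary observations (DNF representation, monotonization of coefficients, and the identity $p(\mathbf{1}_I)=c_I$) ``contain the essence of Goodstein's theorem''; your argument is precisely those observations assembled into a proof. The only external input is the DNF representation theorem for polynomial functions over bounded distributive lattices, which the paper also takes as given, so there is no gap.
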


Informally, Goodstein's theorem asserts that polynomial functions are uniquely
determined by their restrictions to the hypercube $\left\{  0,1\right\}  ^{n}%
$, and a function on the hypercube extends to a polynomial function if and
only if it is monotone.

Let us now consider a distributive lattice $L$ without least and greatest
elements. (We omit the analogous discussion of the cases where $L$ has one
boundary element.) Polynomial functions over $L$ can still be given in DNF of
the form (\ref{eq DNF}) by allowing the coefficients $c_{I}$ to take also the
values $0$ and $1$, which are considered as external boundary elements (see,
e.g., \cite{BCKLSz,CL}). For example, a polynomial function $p\left(
x,y\right)  =a\vee x\vee\left(  b\wedge x\wedge y\right)  $ can be rewritten
as $p\left(  x,y\right)  =a\vee\left(  1\wedge x\right)  \vee\left(  0\wedge
y\right)  \vee\left(  b\wedge x\wedge y\right)  $.

We can still assume monotonicity of the coefficients, and any such system
$c_{I}\in L\cup\left\{  0,1\right\}  \left(  I\subseteq\left[  n\right]
\right)  $ of coefficients gives rise to a polynomial function $p$ over $L$,
provided that $c_{\emptyset}\neq1$ and $c_{\left[  n\right]  }\neq0$. (The
latter two cases correspond to the constant $1$ and constant $0$ functions.)
Just like in the case of bounded lattices, there is a one-to-one
correspondence between such DNF's and polynomial functions, since we can
recover the coefficients of the DNF\ of $p$ from certain values of $p$. To see
this, let us choose elements $a<b$ from $L$ to play the role of $0$ and $1$,
and let $\mathbf{e}_{I}$ be the \textquotedblleft characteristic
vector\textquotedblright\ of $I\subseteq\left[  n\right]  $ (i.e., the tuple
$\mathbf{e}_{I}\in L^{n}$ whose $i$-th component is $b$ if $i\in I$ and $a$ if
$i\notin I$). If $a$ is sufficiently small (less than all non-zero
coefficients in the DNF of $p$) and $b$ is sufficiently large (greater than
all non-one coefficients in the DNF of $p$), then a routine computation shows
that%
\[
p\left(  \mathbf{e}_{I}\right)  =\left\{  \!\!%
\begin{array}
[c]{cc}%
c_{I} & \text{if~}c_{I}\in L,\\
a & \text{if }c_{I}=0,\\
b & \text{if }c_{I}=1.
\end{array}
\right.
\]

Thus we can learn the coefficient $c_{I}$ from the behavior of the value
$p\left(  \mathbf{e}_{I}\right)  $ by letting $a$ decrease and $b$ increase
indefinitely, i.e., the polynomial function $p$ is uniquely determined by its
values on a sufficiently large cube $\left\{  a,b\right\}  ^{n}$ (for a more
detailed discussion, see \cite{BCKLSz}). As the next example shows, this does
not imply that there is only one polynomial function that takes prescribed
values on a \emph{fixed} cube $\left\{  a,b\right\}  ^{n}$.

\begin{example}
\label{ex no least/greatest solution}Let $L$ be the lattice of open subsets of
a topological space $X$, and let $a,b\in L$ with $a\subset b$. Since $L$ is a
bounded distributive lattice, every unary polynomial function $p$ over $L$ can
be represented by a unique DNF of the form $p\left(  x\right)  =c_{0}%
\cup\left(  c_{1}\cap x\right)  $ with $c_{0},c_{1}\in L,c_{0}\subseteq c_{1}%
$. It is straightforward to verify that such a polynomial function satisfies
$p\left(  a\right)  =p\left(  b\right)  =b$ if and only if%
\[
b\setminus a\subseteq c_{0}\subseteq b\quad\text{and}\quad b\subseteq
c_{1}\subseteq X.
\]
Thus, there may be infinitely many polynomial functions $p$ whose restriction
to the \textquotedblleft one-dimensional cube\textquotedblright\ $\left\{
a,b\right\}  $ is constant $b$ (for instance, let $X$ be the real line, and
let $a$ and $b$ be open intervals).
\end{example}

Let us go one step further, and choose a \textquotedblleft
zero\textquotedblright\ and \textquotedblleft one\textquotedblright,\ possibly
different in each coordinate: Let $a_{i},b_{i}\in L$ with $a_{i}<b_{i}$ for
each $i\in\left[  n\right]  $, and let $\widehat{\mathbf{e}}_{I}$ be the
\textquotedblleft characteristic vector\textquotedblright\ of $I\subseteq
\left[  n\right]  $ (i.e., the tuple $\widehat{\mathbf{e}}_{I}\in L^{n}$ whose
$i$-th component is $b_{i}$ if $i\in I$ and $a_{i}$ if $i\notin I$). The task
of finding a polynomial function (or rather all polynomial functions) that
takes prescribed values on the tuples $\widehat{\mathbf{e}}_{I}$ can be
regarded as an interpolation problem.

\begin{interpolationproblem}
Given $D:=\left\{  \widehat{\mathbf{e}}_{I}:I\subseteq\left[  n\right]
\right\}  $ and $f\colon D \rightarrow L$, find all polynomial functions
$p\colon L^{n}\rightarrow L$ such that $p|_{D}=f$.
\end{interpolationproblem}

Note that here the function $f$ is given on the vertices of a rectangular box
(cuboid) instead of a cube as in Theorem~\ref{thm goodstein}. We will solve
this problem in Section~\ref{sect main}, thereby generalizing Goodstein's
theorem. Let us note that the problem can be interesting also in the case of
bounded lattices, for instance, if we do not have access to the values of the
polynomial function on $\left\{  0,1\right\}  ^{n}$, but only on some
\textquotedblleft internal\textquotedblright\ points. We will discuss such
applications in Section~\ref{sect decision}.

\section{Main results\label{sect main}}

In the sequel we assume that $D:=\left\{  \widehat{\mathbf{e}}_{I}\colon
I\subseteq\left[  n\right]  \right\}  $ and $f\colon D\rightarrow L$ are
given, and our goal is to find (the DNF of) all $n$-ary polynomial functions
$p$ over $L$ that satisfy $p|_{D}=f$. Clearly, monotonicity of $f$ is a
necessary condition for the existence of a solution of the Interpolation
Problem, but, in contrast with Goodstein's theorem, monotonicity is not always
sufficient in this more general setting. We will prove that the extra
condition that we need is the following:%
\begin{equation}
f\left(  \widehat{\mathbf{e}}_{I\cup\left\{  k\right\}  }\right)  \wedge
a_{k}\leq f\left(  \widehat{\mathbf{e}}_{I}\right)  \leq f\left(
\widehat{\mathbf{e}}_{I\setminus\left\{  k\right\}  }\right)  \vee
b_{k}\text{\qquad for all }I\subseteq\left[  n\right]  ,k\in\left[  n\right]
. \tag{$\star$}\label{eq star}%
\end{equation}
Observe that the first inequality is trivial if $k\in I$, and the second
inequality is trivial if $k\notin I$.

Our first lemma shows how to obtain inequalities between $f\left(
\widehat{\mathbf{e}}_{S}\right)  $ and $f\left(  \widehat{\mathbf{e}}%
_{T}\right)  $ for $S\subseteq T$ by repeated applications of (\ref{eq star}).

\begin{lemma}
\label{lemma iterated star}If the function $f$ satisfies
\upshape{(\ref{eq star})}, then for all $S\subseteq T\subseteq\left[
n\right]  $ we have%
\[
f\left(  \widehat{\mathbf{e}}_{T}\right)  \wedge\bigwedge\limits_{k\in
T\setminus S}a_{k}\leq f\left(  \widehat{\mathbf{e}}_{S}\right)  \text{ and
}f\left(  \widehat{\mathbf{e}}_{T}\right)  \leq f\left(  \widehat{\mathbf{e}%
}_{S}\right)  \vee\bigvee\limits_{k\in T\setminus S}b_{k}.
\]

\end{lemma}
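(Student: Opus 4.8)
The plan is to prove both inequalities by induction on $\left|T\setminus S\right|$. The base case $\left|T\setminus S\right|=0$ (i.e.\ $S=T$) is trivial, since then both empty meets/joins are neutral and the claimed inequalities reduce to $f(\widehat{\mathbf{e}}_{T})\le f(\widehat{\mathbf{e}}_{S})$, which holds with equality.

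For the inductive step, fix $S\subsetneq T$, pick some $k\in T\setminus S$, and set $T':=T\setminus\{k\}$, so that $S\subseteq T'\subseteq T$ and $\left|T'\setminus S\right|=\left|T\setminus S\right|-1$. First I would apply (\ref{eq star}) with the set $T'$ and the index $k$: since $k\notin T'$, the first inequality of (\ref{eq star}) (which is the nontrivial one when $k\notin I$, applied with $I=T'$, so $I\cup\{k\}=T$) gives
\[
f\left(\widehat{\mathbf{e}}_{T}\right)\wedge a_{k}\le f\left(\widehat{\mathbf{e}}_{T'}\right).
\]
By the induction hypothesis applied to the pair $S\subseteq T'$ we have $f(\widehat{\mathbf{e}}_{T'})\wedge\bigwedge_{j\in T'\setminus S}a_{j}\le f(\widehat{\mathbf{e}}_{S})$. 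Meeting the displayed inequality with $\bigwedge_{j\in T'\setminus S}a_{j}$ and chaining with the induction hypothesis yields
\[
f\left(\widehat{\mathbf{e}}_{T}\right)\wedge\bigwedge_{j\in T\setminus S}a_{j}
=f\left(\widehat{\mathbf{e}}_{T}\right)\wedge a_{k}\wedge\bigwedge_{j\in T'\setminus S}a_{j}
\le f\left(\widehat{\mathbf{e}}_{T'}\right)\wedge\bigwedge_{j\in T'\setminus S}a_{j}
\le f\left(\widehat{\mathbf{e}}_{S}\right),
\]
using $T\setminus S=\{k\}\mathbin{\dot\cup}(T'\setminus S)$. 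The second inequality is entirely dual: apply the second inequality of (\ref{eq star}) with $I=T$ and index $k$ (nontrivial since $k\in T$), obtaining $f(\widehat{\mathbf{e}}_{T})\le f(\widehat{\mathbf{e}}_{T'})\vee b_{k}$, then join the induction hypothesis $f(\widehat{\mathbf{e}}_{T'})\le f(\widehat{\mathbf{e}}_{S})\vee\bigvee_{j\in T'\setminus S}b_{j}$ with $b_{k}$ and chain.

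I do not expect a genuine obstacle here: the only points requiring a little care are bookkeeping — correctly identifying which of the two inequalities in (\ref{eq star}) is nontrivial for the chosen $I$ and $k$ in each half of the argument, and using distributivity only implicitly through the monotonicity of $\wedge$ and $\vee$ (no distributive law is actually needed, just that $\wedge$ and $\vee$ are order-preserving in each argument). One could alternatively do a single induction stripping off elements of $T\setminus S$ one at a time from the ``top''; the argument is symmetric and equally short.
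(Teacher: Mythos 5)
Your proof is correct and is essentially the paper's own argument: the paper writes out the same telescoping chain explicitly, applying the first inequality of (\ref{eq star}) with $I=S\cup\{k_{1},\ldots,k_{m-1}\}$ and $k=k_{m}$ for $m=1,\ldots,r$ and combining the resulting $r$ inequalities, which is just the unrolled form of your induction on $\left|T\setminus S\right|$. The paper likewise proves only the first inequality and notes the second follows dually, so there is nothing substantive to distinguish the two arguments.
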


\begin{proof}
We only prove the first inequality; the second one follows similarly. Let
$T\setminus S=\left\{  k_{1},\ldots,k_{r}\right\}  $, and let us apply (the
first inequality of) condition (\ref{eq star}) with $I=S\cup\left\{
k_{1},\ldots,k_{m-1}\right\}  $ and $k=k_{m}$ for $m=1,\ldots,r$:%
\begin{align*}
f\left(  \widehat{\mathbf{e}}_{S\cup\left\{  k_{1}\right\}  }\right)  \wedge
a_{k_{1}}  &  \leq f\left(  \widehat{\mathbf{e}}_{S}\right)  ,\\
f\left(  \widehat{\mathbf{e}}_{S\cup\left\{  k_{1},k_{2}\right\}  }\right)
\wedge a_{k_{2}}  &  \leq f\left(  \widehat{\mathbf{e}}_{S\cup\left\{
k_{1}\right\}  }\right)  ,\\
&  \;\;\vdots\\
f\left(  \widehat{\mathbf{e}}_{S\cup\left\{  k_{1},\ldots,k_{r}\right\}
}\right)  \wedge a_{k_{r}}  &  \leq f\left(  \widehat{\mathbf{e}}%
_{S\cup\left\{  k_{1},\ldots,k_{r-1}\right\}  }\right)  .
\end{align*}
Combining these $r$ inequalities, we get%
\[
f\left(  \widehat{\mathbf{e}}_{S\cup\left\{  k_{1},\ldots,k_{r}\right\}
}\right)  \wedge a_{k_{1}}\wedge\cdots\wedge a_{k_{r}}\leq f\left(
\widehat{\mathbf{e}}_{S}\right)  .\qedhere
\]

\end{proof}

Let us now show that (\ref{eq star}) is a necessary condition for the
existence of a solution of the Interpolation Problem.

\begin{lemma}
\label{lemma star is necessary}If there is a polynomial function $p$ over $L$
such that $p|_{D}=f$, then $f$ is monotone and satisfies \upshape{(\ref{eq star})}.
\end{lemma}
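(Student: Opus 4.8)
The plan is to assume that a polynomial function $p$ with $p|_{D}=f$ exists and to read off both monotonicity and condition~(\ref{eq star}) directly from the DNF representation of $p$. Write $p\left(\mathbf{x}\right)=\bigvee_{J\subseteq\left[n\right]}\bigl(c_{J}\wedge\bigwedge_{j\in J}x_{j}\bigr)$ with monotone coefficients $c_{J}\in L\cup\left\{0,1\right\}$. Monotonicity of $f$ is immediate: every polynomial function is order-preserving, and $I\subseteq I'$ implies $\widehat{\mathbf{e}}_{I}\leq\widehat{\mathbf{e}}_{I'}$ coordinatewise (since $a_{i}<b_{i}$), hence $f\left(\widehat{\mathbf{e}}_{I}\right)=p\left(\widehat{\mathbf{e}}_{I}\right)\leq p\left(\widehat{\mathbf{e}}_{I'}\right)=f\left(\widehat{\mathbf{e}}_{I'}\right)$.

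For~(\ref{eq star}), the key is to evaluate $p$ at the tuples $\widehat{\mathbf{e}}_{I}$, $\widehat{\mathbf{e}}_{I\cup\left\{k\right\}}$, and $\widehat{\mathbf{e}}_{I\setminus\left\{k\right\}}$ and compare term by term. Fix $I$ and $k$; by the remark after~(\ref{eq star}) we may assume $k\notin I$ for the first inequality and $k\in I$ for the second. For the first inequality, I would start from $f\left(\widehat{\mathbf{e}}_{I\cup\left\{k\right\}}\right)=p\left(\widehat{\mathbf{e}}_{I\cup\left\{k\right\}}\right)=\bigvee_{J}\bigl(c_{J}\wedge\bigwedge_{j\in J}\left(\widehat{\mathbf{e}}_{I\cup\left\{k\right\}}\right)_{j}\bigr)$ and meet both sides with $a_{k}$. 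Splitting the join according to whether $k\in J$ or not, and using that $\left(\widehat{\mathbf{e}}_{I\cup\left\{k\right\}}\right)_{j}=\left(\widehat{\mathbf{e}}_{I}\right)_{j}$ for $j\neq k$ while $\left(\widehat{\mathbf{e}}_{I}\right)_{k}=a_{k}$, each term with $k\notin J$ already appears in $p\left(\widehat{\mathbf{e}}_{I}\right)$, and each term with $k\in J$ becomes, after meeting with $a_{k}$, bounded above by $c_{J\setminus\left\{k\right\}}\wedge a_{k}\wedge\bigwedge_{j\in J\setminus\left\{k\right\}}\left(\widehat{\mathbf{e}}_{I}\right)_{j}$, which by monotonicity of the coefficients is $\leq$ the corresponding term of $p\left(\widehat{\mathbf{e}}_{I}\right)$ (since $\left(\widehat{\mathbf{e}}_{I}\right)_{k}=a_{k}$, the missing factor $x_{k}=a_{k}$ is exactly the $a_{k}$ we met with). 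Hence $f\left(\widehat{\mathbf{e}}_{I\cup\left\{k\right\}}\right)\wedge a_{k}\leq p\left(\widehat{\mathbf{e}}_{I}\right)=f\left(\widehat{\mathbf{e}}_{I}\right)$. The second inequality is dual: expand $f\left(\widehat{\mathbf{e}}_{I}\right)=p\left(\widehat{\mathbf{e}}_{I}\right)$ with $k\in I$, join both sides with $b_{k}$, and observe that every term of $p\left(\widehat{\mathbf{e}}_{I}\right)$ either already occurs in $p\left(\widehat{\mathbf{e}}_{I\setminus\left\{k\right\}}\right)$ (when $k\notin J$) or, after joining with $b_{k}$, is absorbed because the term is $\leq b_{k}$ (when $k\in J$, since the factor $x_{k}=b_{k}$ is present).

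The main obstacle is purely bookkeeping: correctly tracking how the coefficients behave when $k$ is added to or removed from the index set $J$, and invoking the monotonicity of the $c_{J}$'s at exactly the right moment (the step $c_{J}\leq c_{J\cup\left\{k\right\}}$, respectively $c_{J\setminus\left\{k\right\}}\geq$ nothing useful, so one rather uses $c_{J}\wedge a_{k}\leq c_{J\setminus\left\{k\right\}}\wedge a_{k}$ via $c_{J\setminus\left\{k\right\}}\geq c_{J}$). One must also handle the external boundary values $0,1$ that the coefficients may take, but this causes no trouble: meeting with $a_{k}\in L$ kills any would-be value $1$, and joining with $b_{k}\in L$ kills any would-be value $0$, so all the inequalities stay within $L$. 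No genuine difficulty arises once the term-by-term comparison is set up carefully.
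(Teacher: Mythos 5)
Your monotonicity argument and your treatment of the second inequality of (\ref{eq star}) are fine, and your overall strategy (expand the DNF at $\widehat{\mathbf{e}}_{I\cup\left\{k\right\}}$, meet with $a_{k}$, distribute, and dominate each joinand by $p\left(\widehat{\mathbf{e}}_{I}\right)$) is essentially the paper's proof carried out with the full $n$-ary DNF instead of the unary polynomial $u\left(x\right)=p\left(\left(\widehat{\mathbf{e}}_{I}\right)_{k}^{x}\right)=c_{0}\vee\left(c_{1}\wedge x\right)$ that the paper substitutes in. However, the step you use to handle the joinands with $k\in J$ is wrong as stated: you invoke ``$c_{J\setminus\left\{k\right\}}\geq c_{J}$'', but the monotonicity convention for DNF coefficients is the opposite one, $J'\subseteq J\implies c_{J'}\leq c_{J}$, so neither $c_{J}\wedge a_{k}\leq c_{J\setminus\left\{k\right\}}\wedge a_{k}$ nor the claimed bound of the $J$-th joinand by the $\left(J\setminus\left\{k\right\}\right)$-th joinand of $p\left(\widehat{\mathbf{e}}_{I}\right)$ follows (consider $c_{J\setminus\left\{k\right\}}=0$ and $c_{J}=1$: then $c_{J}\wedge a_{k}=a_{k}\not\leq 0$).

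The fix is immediate and needs no coefficient monotonicity at all: compare the $J$-th joinand with the $J$-th joinand. For $k\in J$ the joinand of $p\left(\widehat{\mathbf{e}}_{I\cup\left\{k\right\}}\right)$ contains the factor $b_{k}$, so after meeting with $a_{k}$ it becomes $c_{J}\wedge a_{k}\wedge\bigwedge_{j\in J\setminus\left\{k\right\}}\left(\widehat{\mathbf{e}}_{I}\right)_{j}$, which is exactly the $J$-th joinand of $p\left(\widehat{\mathbf{e}}_{I}\right)$, since $\left(\widehat{\mathbf{e}}_{I}\right)_{k}=a_{k}=a_{k}\wedge b_{k}$. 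Your closing parenthetical already gestures at this, so the slip is local; once corrected, your term-by-term computation is a legitimate (if more bookkeeping-heavy) version of the paper's argument, which achieves the same effect in two lines by freezing all coordinates except the $k$-th and computing $u\left(b_{k}\right)\wedge a_{k}=\left(c_{0}\wedge a_{k}\right)\vee\left(c_{1}\wedge a_{k}\right)\leq u\left(a_{k}\right)$.
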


\begin{proof}
Assume that $p$ is a polynomial function that extends $f$. Since $p$ is
monotone, $f$ is also monotone. To show that (\ref{eq star}) holds, let us fix
$I\subseteq\left[  n\right]  $ and $k\in\left[  n\right]  $, and let us assume
that $k\notin I$ (the case $k\in I$ can be dealt with similarly). Let $\left(
\widehat{\mathbf{e}}_{I}\right)  _{k}^{x}\in L^{n}$ denote the $n$-tuple
obtained from $\widehat{\mathbf{e}}_{I}$ by replacing its $k$-th component by
the variable $x$. We can define a unary polynomial function $u$ over $L$ by
$u\left(  x\right)  :=p\left(  \left(  \widehat{\mathbf{e}}_{I}\right)
_{k}^{x}\right)  $. Using this notation, (\ref{eq star}) takes the form
$u\left(  b_{k}\right)  \wedge a_{k}\leq u\left(  a_{k}\right)  $. The DNF of
$u$ is of the form $u\left(  x\right)  =c_{0}\vee\left(  c_{1}\wedge x\right)
$, where $c_{0},c_{1}\in L\cup\left\{  0,1\right\}  $. Using distributivity
and the fact that $a_{k}<b_{k}$, we can now easily prove the desired
inequality:%
\begin{multline*}
u\left(  b_{k}\right)  \wedge a_{k}=\left(  c_{0}\vee\left(  c_{1}\wedge
b_{k}\right)  \right)  \wedge a_{k}=\left(  c_{0}\wedge a_{k}\right)
\vee\left(  c_{1}\wedge b_{k}\wedge a_{k}\right) \\
=\left(  c_{0}\wedge a_{k}\right)  \vee\left(  c_{1}\wedge a_{k}\right)  \leq
c_{0}\vee\left(  c_{1}\wedge a_{k}\right)  =u\left(  a_{k}\right)  .\qedhere
\end{multline*}

\end{proof}

To find all polynomial functions $p$ satisfying $p|_{D}=f$, we will make use
of the Birkhoff-Priestley representation theorem to embed $L$ into a Boolean
algebra $B$.\ For the sake of canonicity, we assume that $L$ generates $B$;
under this assumption $B$ is uniquely determined up to isomorphism. The
boundary elements of $B$ will be denoted by $0$ and $1$. This notation will
not lead to ambiguity since if $L$ has a least (resp. greatest) element, then
it must coincide with $0$ (resp. $1$). The complement of an element $a\in B$
is denoted by $a^{\prime}$. Given a function $f\colon D\rightarrow L$, we
define the following two elements in $B$ for each $I\subseteq\left[  n\right]
$:%
\[
c_{I}^{-}:=f\left(  \widehat{\mathbf{e}}_{I}\right)  \wedge\bigwedge
\limits_{i\notin I}a_{i}^{\prime},\qquad c_{I}^{+}:=f\left(  \widehat
{\mathbf{e}}_{I}\right)  \vee\bigvee\limits_{i\in I}b_{i}^{\prime}.
\]
Observe that $c_{I}^{-}\leq c_{I}^{+}$, and if $f$ is monotone, then
$I\subseteq J$ implies $c_{I}^{-}\leq c_{J}^{-}$ and $c_{I}^{+}\leq c_{J}^{+}%
$. Let $p^{-}$ and $p^{+}$ be the polynomial functions over $B$ which are
given by these two systems of coefficients. We will see that $p^{-}$ and
$p^{+}$ are the least and greatest polynomial functions over $B$ whose
restriction to $D$ coincides with $f$ (whenever there exists such a polynomial function).

\begin{lemma}
\label{lemma estimating p+}If $f$ is monotone and satisfies
\upshape{(\ref{eq star})}, then $p^{+}\left(  \widehat{\mathbf{e}}_{J}\right)
\leq f\left(  \widehat{\mathbf{e}}_{J}\right)  $ for all $J\subseteq\left[
n\right]  $.
\end{lemma}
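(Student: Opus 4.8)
The plan is to unwind the definition of $p^{+}$ at a point $\widehat{\mathbf{e}}_{J}$ and show term-by-term that the disjunction is dominated by $f(\widehat{\mathbf{e}}_{J})$. Recall that $p^{+}$ is the polynomial over $B$ with coefficients $c_{I}^{+}=f(\widehat{\mathbf{e}}_{I})\vee\bigvee_{i\in I}b_{i}^{\prime}$, so that
\[
p^{+}\left(  \widehat{\mathbf{e}}_{J}\right)  =\bigvee_{I\subseteq\left[  n\right]  }\Bigl(c_{I}^{+}\wedge\bigwedge_{i\in I}\bigl(\widehat{\mathbf{e}}_{J}\bigr)_{i}\Bigr)=\bigvee_{I\subseteq\left[  n\right]  }\Bigl(c_{I}^{+}\wedge\bigwedge_{i\in I}e_{i}\Bigr),
\]
where $e_{i}=b_{i}$ if $i\in J$ and $e_{i}=a_{i}$ if $i\notin J$. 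So it suffices to fix an arbitrary $I\subseteq\left[  n\right]  $ and prove $c_{I}^{+}\wedge\bigwedge_{i\in I}e_{i}\leq f(\widehat{\mathbf{e}}_{J})$. Writing $I=(I\cap J)\cup(I\setminus J)$, the meet $\bigwedge_{i\in I}e_{i}$ splits as $\bigl(\bigwedge_{i\in I\cap J}b_{i}\bigr)\wedge\bigl(\bigwedge_{i\in I\setminus J}a_{i}\bigr)$, and by distributivity in $B$ the term becomes
\[
\Bigl(f\left(  \widehat{\mathbf{e}}_{I}\right)  \vee\bigvee_{i\in I}b_{i}^{\prime}\Bigr)\wedge\bigwedge_{i\in I\cap J}b_{i}\wedge\bigwedge_{i\in I\setminus J}a_{i}.
\]

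Next I would handle the $b_{i}^{\prime}$ summands. For $i\in I\cap J$ the factor $b_{i}$ annihilates $b_{i}^{\prime}$ (since $b_{i}\wedge b_{i}^{\prime}=0$ in $B$), so those disjuncts drop out. What survives from $\bigvee_{i\in I}b_{i}^{\prime}$ is at most $\bigvee_{i\in I\setminus J}b_{i}^{\prime}$, so the whole term is bounded above by
\[
\Bigl(f\left(  \widehat{\mathbf{e}}_{I}\right)  \vee\bigvee_{i\in I\setminus J}b_{i}^{\prime}\Bigr)\wedge\bigwedge_{i\in I\setminus J}a_{i}\wedge\bigwedge_{i\in I\cap J}b_{i}.
\]
Now I distribute the $\bigwedge_{i\in I\setminus J}a_{i}$ factor across the outer join: the $b_{i}^{\prime}$ part contributes $\bigl(\bigvee_{i\in I\setminus J}b_{i}^{\prime}\bigr)\wedge\bigwedge_{i\in I\setminus J}a_{i}$, and since $a_{i}\leq b_{i}$ gives $a_{i}\wedge b_{i}^{\prime}=0$ for each relevant $i$, this entire part is $0$. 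Hence the term is at most $f(\widehat{\mathbf{e}}_{I})\wedge\bigwedge_{i\in I\setminus J}a_{i}$, with the harmless extra factor $\bigwedge_{i\in I\cap J}b_{i}$ which I simply discard by monotonicity of $\wedge$ (it only decreases the expression, so the upper bound $f(\widehat{\mathbf{e}}_{I})\wedge\bigwedge_{i\in I\setminus J}a_{i}$ still holds).

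Finally I invoke Lemma~\ref{lemma iterated star}. The indices satisfy $I\cap J\subseteq I$ and $I\setminus(I\cap J)=I\setminus J$, so with $S=I\cap J$ and $T=I$ the first inequality of that lemma yields $f(\widehat{\mathbf{e}}_{I})\wedge\bigwedge_{i\in I\setminus J}a_{i}\leq f(\widehat{\mathbf{e}}_{I\cap J})$. Since $I\cap J\subseteq J$ and $f$ is monotone, $f(\widehat{\mathbf{e}}_{I\cap J})\leq f(\widehat{\mathbf{e}}_{J})$, completing the bound $c_{I}^{+}\wedge\bigwedge_{i\in I}e_{i}\leq f(\widehat{\mathbf{e}}_{J})$. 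Taking the join over all $I$ gives $p^{+}(\widehat{\mathbf{e}}_{J})\leq f(\widehat{\mathbf{e}}_{J})$. I expect the main obstacle to be bookkeeping: carefully tracking which $b_{i}^{\prime}$ disjuncts are killed by which $b_{i}$ or $a_{i}$ factors, and making sure the final application of Lemma~\ref{lemma iterated star} uses exactly the index set $T\setminus S=I\setminus J$; the distributive-lattice manipulations themselves are routine once the correct $S$ and $T$ are identified.
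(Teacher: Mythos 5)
Your proof is correct and follows essentially the same route as the paper: expand $p^{+}$ at $\widehat{\mathbf{e}}_{J}$, bound each joinand, annihilate the $b_{i}^{\prime}$ disjuncts using $a_{i}\leq b_{i}$ and complementation in $B$, and finish with Lemma~\ref{lemma iterated star} (with $S=I\cap J$, $T=I$) plus monotonicity of $f$. The only cosmetic difference is that you split the $b_{i}^{\prime}$ terms into the cases $i\in I\cap J$ and $i\in I\setminus J$, whereas the paper kills them all at once via $b_{i}^{\prime}\wedge\bigwedge_{j\in I}b_{j}=0$.
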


\begin{proof}
Let us fix $J\subseteq\left[  n\right]  $ and consider the value of $p^{+}$ at
$\widehat{\mathbf{e}}_{J}$:%
\[
p^{+}\left(  \widehat{\mathbf{e}}_{J}\right)  =\bigvee_{I\subseteq\left[
n\right]  }%
\bigl(%
c_{I}^{+}\wedge\bigwedge_{j\in I}\left(  \widehat{\mathbf{e}}_{J}\right)  _{j}%
\bigr)%
=\bigvee_{I\subseteq\left[  n\right]  }%
\bigl(%
c_{I}^{+}\wedge\bigwedge_{j\in I\setminus J}a_{j}\wedge\bigwedge_{j\in I\cap
J}b_{j}%
\bigr)%
.
\]
It is sufficient to verify that each joinand is at most $f\left(
\widehat{\mathbf{e}}_{J}\right)  $. Taking into account the definition of
$c_{I}^{+}$, this amounts to showing that%
\begin{equation}%
\bigl(%
f\left(  \widehat{\mathbf{e}}_{I}\right)  \vee\bigvee\limits_{i\in I}%
b_{i}^{\prime}%
\bigr)%
\wedge\bigwedge_{j\in I\setminus J}a_{j}\wedge\bigwedge_{j\in I\cap J}%
b_{j}\leq f\left(  \widehat{\mathbf{e}}_{J}\right)
\label{eq inequality to prove}%
\end{equation}
holds for all $I\subseteq\left[  n\right]  $. Distributing meets over joins,
the left hand side of (\ref{eq inequality to prove}) becomes%
\begin{equation}%
\bigl(%
f\left(  \widehat{\mathbf{e}}_{I}\right)  \wedge\bigwedge_{j\in I\setminus
J}a_{j}\wedge\bigwedge_{j\in I\cap J}b_{j}%
\bigr)%
\vee\bigvee\limits_{i\in I}%
\bigl(%
b_{i}^{\prime}\wedge\bigwedge_{j\in I\setminus J}a_{j}\wedge\bigwedge_{j\in
I\cap J}b_{j}%
\bigr)%
. \label{eq join to estimate}%
\end{equation}
Let us examine each joinand of this expression. For each $i\in I$, the joinand
involving $b_{i}^{\prime}$ equals $0$, since%
\[
b_{i}^{\prime}\wedge\bigwedge_{j\in I\setminus J}a_{j}\wedge\bigwedge_{j\in
I\cap J}b_{j}\leq b_{i}^{\prime}\wedge\bigwedge_{j\in I\setminus J}b_{j}%
\wedge\bigwedge_{j\in I\cap J}b_{j}=b_{i}^{\prime}\wedge\bigwedge_{j\in
I}b_{j}\leq b_{i}^{\prime}\wedge b_{i}=0.
\]

The joinand of (\ref{eq join to estimate}) that involves $f\left(
\widehat{\mathbf{e}}_{I}\right)  $ can be estimated using (\ref{eq star}) and
Lemma~\ref{lemma iterated star} (with $T=I$ and $S=I\cap J$):%
\[
f\left(  \widehat{\mathbf{e}}_{I}\right)  \wedge\bigwedge_{j\in I\setminus
J}a_{j}\wedge\bigwedge_{j\in I\cap J}b_{j}\leq f\left(  \widehat{\mathbf{e}%
}_{I}\right)  \wedge\bigwedge_{j\in I\setminus\left(  I\cap J\right)  }%
a_{j}\leq f\left(  \widehat{\mathbf{e}}_{I\cap J}\right)  .
\]
Since $f$ is monotone, we have $f\left(  \widehat{\mathbf{e}}_{I\cap
J}\right)  \leq f\left(  \widehat{\mathbf{e}}_{J}\right)  $, and this proves
(\ref{eq inequality to prove}).
\end{proof}

The following lemma is the dual of Lemma~\ref{lemma estimating p+}, and it can
be proved by using the conjunctive normal form of $p^{-}$.

\begin{lemma}
\label{lemma estimating p-}If $f$ is monotone and satisfies
\upshape{(\ref{eq star})}, then $p^{-}\left(  \widehat{\mathbf{e}}_{J}\right)
\geq f\left(  \widehat{\mathbf{e}}_{J}\right)  $ for all $J\subseteq\left[
n\right]  $.
\end{lemma}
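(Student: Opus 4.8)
The plan is to mirror the proof of Lemma~\ref{lemma estimating p+}, but working with the \emph{conjunctive normal form} (CNF) of $p^{-}$ instead of the disjunctive normal form, since the statement to be proved is dual. First I would recall that any polynomial function over a bounded distributive lattice admits a CNF representation
\[
p^{-}\left(  \mathbf{x}\right)  =\bigwedge_{I\subseteq\left[  n\right]  }\bigl(d_{I}\vee\bigvee_{i\notin I}x_{i}\bigr),
\]
and I would compute the coefficients $d_{I}$ from the given DNF coefficients $c_{I}^{-}=f\left(  \widehat{\mathbf{e}}_{I}\right)  \wedge\bigwedge_{i\notin I}a_{i}^{\prime}$. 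By the standard DNF--CNF conversion in a Boolean algebra, one obtains $d_{I}=\bigwedge_{J\subseteq I}c_{J}^{-}$, or in an even more convenient form after simplification, $d_{I}=f\left(  \widehat{\mathbf{e}}_{I}\right)  \vee\bigvee_{i\notin I}a_{i}^{\prime\prime}$... — more precisely I expect the clean dual statement to be $d_{I}=f\left(  \widehat{\mathbf{e}}_{I}\right)  \vee\bigvee_{i\notin I}a_{i}$, matching how $c_{I}^{+}=f\left(  \widehat{\mathbf{e}}_{I}\right)  \vee\bigvee_{i\in I}b_{i}^{\prime}$ dualizes. Getting this coefficient identification right is where I would be most careful, as the bookkeeping of which index set carries $a_{i}^{\prime}$ versus $b_{i}^{\prime}$ and whether it is $i\in I$ or $i\notin I$ is exactly the kind of place an off-by-a-complement error creeps in.

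Once the CNF of $p^{-}$ is in hand, the argument proceeds exactly dually to Lemma~\ref{lemma estimating p+}. I would fix $J\subseteq\left[  n\right]  $ and evaluate
\[
p^{-}\left(  \widehat{\mathbf{e}}_{J}\right)  =\bigwedge_{I\subseteq\left[  n\right]  }\bigl(d_{I}\vee\bigvee_{i\notin I,\ i\in J}b_{i}\vee\bigvee_{i\notin I,\ i\notin J}a_{i}\bigr),
\]
and it suffices to show that each meetand is at least $f\left(  \widehat{\mathbf{e}}_{J}\right)  $. Substituting the expression for $d_{I}$ and distributing joins over meets, each meetand splits into a term involving $f\left(  \widehat{\mathbf{e}}_{I}\right)  $ together with several terms involving $a_{i}$ (from the coefficient) which, combined with the $a_{i}$'s and $b_{i}$'s already present, collapse to $1$ by the complementation identity $a_{i}\vee a_{i}^{\prime}=1$ — this is the dual of the step in Lemma~\ref{lemma estimating p+} where the $b_{i}^{\prime}$-joinands vanished because $b_{i}^{\prime}\wedge b_{i}=0$.

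The remaining term, the one genuinely involving $f$, is handled by the second (join) inequality of Lemma~\ref{lemma iterated star} together with monotonicity of $f$, dually to how Lemma~\ref{lemma estimating p+} used the first (meet) inequality. Concretely, for the meetand indexed by $I$ I expect to need $f\left(  \widehat{\mathbf{e}}_{I}\right)  \vee\bigvee_{j\in (I\cup J)\setminus I}b_{j}\vee\cdots\geq f\left(  \widehat{\mathbf{e}}_{J}\right)$, which follows by applying Lemma~\ref{lemma iterated star} with $T=I\cup J$ and $S=I$ to get $f\left(  \widehat{\mathbf{e}}_{I\cup J}\right)  \leq f\left(  \widehat{\mathbf{e}}_{I}\right)  \vee\bigvee_{j\in J\setminus I}b_{j}$, and then using $f\left(  \widehat{\mathbf{e}}_{J}\right)  \leq f\left(  \widehat{\mathbf{e}}_{I\cup J}\right)$ by monotonicity. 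The main obstacle, as noted, is purely notational: pinning down the CNF coefficients of $p^{-}$ correctly and tracking the complements through the distributive expansion; conceptually there is nothing new beyond the dual of the previous lemma, and indeed the cleanest writeup may simply invoke order-duality of distributive lattices (swapping $\wedge\leftrightarrow\vee$, $a_{i}\leftrightarrow b_{i}^{\prime}$, $\leq\ \leftrightarrow\ \geq$, and $c_{I}^{+}\leftrightarrow c_{I}^{-}$) to transport Lemma~\ref{lemma estimating p+} verbatim.
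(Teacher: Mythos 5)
Your overall strategy is exactly the one the paper intends (the paper itself only remarks that the lemma ``can be proved by using the conjunctive normal form of $p^{-}$'' and gives no details), and the two key ingredients you identify --- killing the extraneous clauses by complementation, then applying the second inequality of Lemma~\ref{lemma iterated star} with $T=I\cup J$, $S=I$ together with monotonicity --- are precisely what is needed. However, the one step you flagged as uncertain, the identification of the CNF coefficients, is wrong in both versions you propose, and the second version would actually sink the argument. The correct statement is that the CNF of $p^{-}$ uses the \emph{same} coefficients as its DNF: writing the clause for $I$ as $d_{I}\vee\bigvee_{i\notin I}x_{i}$, one has $d_{I}=p^{-}(\mathbf{1}_{I})=\bigvee_{K\subseteq I}c_{K}^{-}$ (a join, not the meet $\bigwedge_{J\subseteq I}c_{J}^{-}$ you wrote), which by monotonicity of the $c_{K}^{-}$ collapses to $c_{I}^{-}=f(\widehat{\mathbf{e}}_{I})\wedge\bigwedge_{i\notin I}a_{i}^{\prime}$. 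Your ``clean dual form'' $d_{I}=f(\widehat{\mathbf{e}}_{I})\vee\bigvee_{i\notin I}a_{i}$ is not correct, and note that it contains no complemented elements at all, so the step you describe next --- collapsing clauses to $1$ via $a_{i}\vee a_{i}^{\prime}=1$ --- would have nothing to act on. (The asymmetry with $c_{I}^{+}$ that you were trying to dualize is illusory: duality exchanges the DNF of $p^{+}$ with the CNF of $p^{-}$, and the coefficient of the latter is $c_{I}^{-}$ itself.)

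With $d_{I}=c_{I}^{-}$ the rest of your sketch goes through verbatim. The meetand indexed by $I$ at $\widehat{\mathbf{e}}_{J}$ is
\[
\Bigl(f(\widehat{\mathbf{e}}_{I})\wedge\bigwedge_{k\notin I}a_{k}^{\prime}\Bigr)\vee V,\qquad V:=\bigvee_{i\in J\setminus I}b_{i}\vee\bigvee_{i\notin I\cup J}a_{i},
\]
and distributing the join over the meet, each clause $a_{k}^{\prime}\vee V$ with $k\notin I$ equals $1$ because $V\geq a_{k}$ (using $a_{k}\leq b_{k}$ when $k\in J\setminus I$). The surviving term is $f(\widehat{\mathbf{e}}_{I})\vee V\geq f(\widehat{\mathbf{e}}_{I})\vee\bigvee_{i\in J\setminus I}b_{i}\geq f(\widehat{\mathbf{e}}_{I\cup J})\geq f(\widehat{\mathbf{e}}_{J})$, exactly as you indicate. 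So this is a repairable but genuine slip: as written, the coefficient formulas are false, and the proof only closes once they are replaced by $d_{I}=c_{I}^{-}$.
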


The estimates obtained in the previous two lemmas allow us to find all
solutions of our interpolation problem over $B$, whenever a solution exists.

\begin{theorem}
\label{thm all solutions over B}Let $D=\left\{  \widehat{\mathbf{e}}_{I}\colon
I\subseteq\left[  n\right]  \right\}  $ and $f\colon D\rightarrow L$ be given,
as in the Interpolation Problem. Suppose that $f$ is monotone and satisfies
{\upshape{(\ref{eq star})}}, and let $p$ be an $n$-ary polynomial function
over $B$ given by the DNF corresponding to a system of coefficients $c_{I}\in
B\left(  I\subseteq\left[  n\right]  \right)  $. Then the following three
conditions are equivalent:%
\renewcommand{\labelenumi}{\textup{(\roman{enumi})}}
\renewcommand{\theenumi}{\labelenumi}%

\begin{enumerate}
\item \label{item solution in thm all solutions over B}$p|_{D}=f$;

\item \label{item coefficientwise in thm all solutions over B}for all
$I\subseteq\left[  n\right]  $ the inequalities $c_{I}^{-}\leq c_{I}\leq
c_{I}^{+}$ hold;

\item \label{item pointwise in thm all solutions over B}for all $\mathbf{x}\in
L^{n}$ we have $p^{-}\left(  \mathbf{x}\right)  \leq p\left(  \mathbf{x}%
\right)  \leq p^{+}\left(  \mathbf{x}\right)  $.
\end{enumerate}
\end{theorem}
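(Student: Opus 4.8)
The plan is to prove the cyclic chain of implications (i) $\Rightarrow$ (iii) $\Rightarrow$ (ii) $\Rightarrow$ (i), using the two estimating lemmas as the main tool for the first implication.

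First I would prove (iii) $\Rightarrow$ (ii). Since the characteristic vectors $\widehat{\mathbf{e}}_I$ lie in $L^n$, condition (iii) gives in particular $p^-\left(\widehat{\mathbf{e}}_I\right)\leq p\left(\widehat{\mathbf{e}}_I\right)\leq p^+\left(\widehat{\mathbf{e}}_I\right)$ for every $I$. The point is to extract coefficient inequalities from these pointwise ones. Recall from the preliminaries that in a Boolean algebra $B$ (which \emph{is} a bounded distributive lattice) the coefficients of a DNF with monotone coefficients are recovered as $p\left(\mathbf{1}_I\right)=c_I$, where $\mathbf{1}_I$ is the $0/1$-characteristic vector. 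The subtlety is that our cuboid vectors $\widehat{\mathbf{e}}_I$ are not the Boolean characteristic vectors, so I cannot directly read off $c_I$. Instead I would argue more carefully: because $c_I^-$ and $c_I^+$ are built from $f\left(\widehat{\mathbf{e}}_I\right)$ by meeting with $\bigwedge_{i\notin I}a_i'$ respectively joining with $\bigvee_{i\in I}b_i'$, and these are precisely the coefficients of $p^-$ and $p^+$, one can compare the DNF of $p$ against those of $p^-$ and $p^+$ coefficient by coefficient once the pointwise inequality is known — in a Boolean algebra the monotone-coefficient DNF is unique, so pointwise domination of polynomial functions is equivalent to coefficientwise domination. (This last equivalence is exactly the Boolean-algebra analogue of the recovery-of-coefficients discussion; I would either cite it or give the short direct verification that $c_I = p(\mathbf{1}_I)$ is monotone in the pointwise order on $p$.)

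Next, (ii) $\Rightarrow$ (i). Assume $c_I^-\leq c_I\leq c_I^+$ for all $I$. Monotonicity of the $c_I^{\pm}$ in $I$ (already noted in the excerpt) together with the squeeze does \emph{not} immediately give monotonicity of the $c_I$, but we may replace each $c_I$ by $\bigvee_{J\subseteq I}c_J$ without changing the polynomial function $p$, and this replacement stays within the interval $[c_I^-,c_I^+]$ because $c_J^-\leq c_I^-\leq c_I$ (so the join is $\geq c_I\geq c_I^-$) and each $c_J\leq c_J^+\leq c_I^+$ (so the join is $\leq c_I^+$). Hence we may assume the $c_I$ monotone. Now evaluate $p$ at $\widehat{\mathbf{e}}_J$: monotonicity gives $p^-\left(\widehat{\mathbf{e}}_J\right)\leq p\left(\widehat{\mathbf{e}}_J\right)\leq p^+\left(\widehat{\mathbf{e}}_J\right)$ since the DNF of $p$ sits coefficientwise between those of $p^-$ and $p^+$. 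By Lemma~\ref{lemma estimating p+} we have $p^+\left(\widehat{\mathbf{e}}_J\right)\leq f\left(\widehat{\mathbf{e}}_J\right)$, and by Lemma~\ref{lemma estimating p-} we have $p^-\left(\widehat{\mathbf{e}}_J\right)\geq f\left(\widehat{\mathbf{e}}_J\right)$; combining, $p\left(\widehat{\mathbf{e}}_J\right)=f\left(\widehat{\mathbf{e}}_J\right)$, i.e. $p|_D=f$.

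Finally, (i) $\Rightarrow$ (iii): if $p|_D=f$, then in particular applying the already-established (ii) $\Rightarrow$ (i) is not what we want; rather, we note that $p^-$ and $p^+$ themselves satisfy (ii) trivially (their coefficients are $c_I^-$ and $c_I^+$), hence by (ii) $\Rightarrow$ (i) they both restrict to $f$ on $D$. Since any polynomial function $p$ with $p|_D=f$ agrees with $p^-$ and $p^+$ on $D$, and $D$ contains the cuboid vertices, we want to conclude $p^-\leq p\leq p^+$ everywhere on $L^n$. For this I would use the substitution trick from Lemma~\ref{lemma star is necessary}: it suffices to show $p^-\leq p\leq p^+$ on all of $B^n$ (then restrict to $L^n$), and on the Boolean algebra $B$ one can again pass to coefficients — any two monotone-coefficient DNFs agreeing on the Boolean characteristic vectors are equal, and more generally the partial data $p|_D=f$ forces the coefficients of $p$ into $[c_I^-,c_I^+]$. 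Concretely: fix $I$ and plug a vector into $p$ that isolates the coefficient $c_I$ up to the boundary data $a_i,b_i$, exactly as in the recovery computation sketched in Section~\ref{sect preliminaries}; the constraint $p\left(\widehat{\mathbf{e}}_I\right)=f\left(\widehat{\mathbf{e}}_I\right)$ plus monotonicity then pins $c_I$ between $c_I^-$ and $c_I^+$. This yields (ii), and then (ii) $\Rightarrow$ (iii) by pointwise evaluation of the coefficientwise inequality. I expect the main obstacle to be the bookkeeping in this last direction — making precise how the single equation $p\left(\widehat{\mathbf{e}}_I\right)=f\left(\widehat{\mathbf{e}}_I\right)$ translates into the two-sided coefficient bound $c_I^-\leq c_I\leq c_I^+$, which is where the complements $a_i'$ and $b_i'$ and the hypothesis $a_i<b_i$ enter. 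Everything else is routine distributive-lattice manipulation of the kind already performed in Lemmas~\ref{lemma estimating p+} and \ref{lemma estimating p-}.
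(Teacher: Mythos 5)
Two of your arrows are essentially the paper's: your (ii) $\Rightarrow$ (i) (coefficientwise domination gives pointwise domination, then Lemmas~\ref{lemma estimating p+} and~\ref{lemma estimating p-} squeeze $p(\widehat{\mathbf{e}}_J)$ between two copies of $f(\widehat{\mathbf{e}}_J)$) is exactly how the paper proves (iii) $\Rightarrow$ (i). But your direct proof of (iii) $\Rightarrow$ (ii) has a genuine gap: it rests on the claim that, for monotone-coefficient DNFs over $B$, pointwise domination \emph{on $L^n$} is equivalent to coefficientwise domination. The recovery device you invoke, $c_I=p(\mathbf{1}_I)$, requires evaluating at the Boolean characteristic vectors $\mathbf{1}_I\in B^n$, which need not lie in $L^n$ (the whole point of the generalization is that $L$ may lack $0$ and $1$), while condition (iii) constrains $p$ only on $L^n$; and the recovery discussion in Section~\ref{sect preliminaries} is a limiting statement ($a$ decreasing, $b$ increasing indefinitely) for coefficients in $L\cup\{0,1\}$, not a single-evaluation identity valid for arbitrary coefficients in $B$. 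The paper never proves (iii) $\Rightarrow$ (ii) directly: it proves (iii) $\Rightarrow$ (i) by the squeeze (which uses the pointwise inequality only at the points of $D\subseteq L^n$) and then (i) $\Rightarrow$ (ii). Your cycle can be repaired the same way at no cost, since your (ii) $\Rightarrow$ (i) argument in fact uses (iii) only on $D$.

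The second gap is the step you yourself flag as the main obstacle, namely (i) $\Rightarrow$ (ii), and your proposed mechanism for it (``plug a vector into $p$ that isolates the coefficient $c_I$'') is not the right one: no substitution from $L^n$ isolates a coefficient lying in $B$. The paper's device is to work on the other side of the equation: starting from $c_J^{-}=p(\widehat{\mathbf{e}}_J)\wedge\bigwedge_{j\notin J}a_j'$, substitute the DNF of $p$ and distribute; every joinand indexed by an $I\not\subseteq J$ acquires a factor $a_i\wedge a_i'=0$ and vanishes, and each surviving joinand is $\leq c_I\leq c_J$ by monotonicity of the coefficients, giving $c_J^{-}\leq c_J$; the bound $c_J\leq c_J^{+}$ is dual, using $b_i\wedge b_i'=0$. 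Until this computation (or an equivalent one) is actually carried out, the implication (i) $\Rightarrow$ (ii) --- and with it your whole cycle --- is not established.
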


\begin{proof}
Implication \ref{item coefficientwise in thm all solutions over B}$\implies
$\ref{item pointwise in thm all solutions over B} is trivial. To prove
\ref{item solution in thm all solutions over B}$\implies$%
\ref{item coefficientwise in thm all solutions over B}, assume that $p|_{D}%
=f$, i.e., $p\left(  \widehat{\mathbf{e}}_{J}\right)  =f\left(  \widehat
{\mathbf{e}}_{J}\right)  $ for all $J\subseteq\left[  n\right]  $. Then we can
replace $f\left(  \widehat{\mathbf{e}}_{J}\right)  $ by $p\left(
\widehat{\mathbf{e}}_{J}\right)  $ in the definition of $c_{J}^{-}$, and we
can compute its value by substituting $\widehat{\mathbf{e}}_{J}$ into the
DNF\ of $p$:%
\begin{align*}
c_{J}^{-}  &  =f\left(  \widehat{\mathbf{e}}_{J}\right)  \wedge\bigwedge
\limits_{j\notin J}a_{j}^{\prime}=p\left(  \widehat{\mathbf{e}}_{J}\right)
\wedge\bigwedge\limits_{j\notin J}a_{j}^{\prime}=%
\Bigl(%
\bigvee_{I\subseteq\left[  n\right]  }%
\bigl(%
c_{I}\wedge\bigwedge_{i\in I}\left(  \widehat{\mathbf{e}}_{J}\right)  _{i}%
\bigr)%
\Bigr)%
\wedge\bigwedge\limits_{j\notin J}a_{j}^{\prime}\\
&  =\bigvee_{I\subseteq\left[  n\right]  }%
\bigl(%
c_{I}\wedge\bigwedge_{i\in I\setminus J}a_{i}\wedge\bigwedge_{i\in I\cap
J}b_{i}\wedge\bigwedge\limits_{j\notin J}a_{j}^{\prime}%
\bigr)%
.
\end{align*}
If there exists $i\in I\setminus J$, then $a_{i}\wedge a_{i}^{\prime}=0$
appears in the joinand corresponding to $I$, hence we can omit each of these
terms from the join, and keep only those where $I\setminus J=\emptyset$:%
\[
c_{J}^{-}=\bigvee_{I\subseteq J}%
\bigl(%
c_{I}\wedge\bigwedge_{i\in I\setminus J}a_{i}\wedge\bigwedge_{i\in I\cap
J}b_{i}\wedge\bigwedge\limits_{j\notin J}a_{j}^{\prime}%
\bigr)%
\leq\bigvee_{I\subseteq J}c_{I}=c_{J}.
\]
This proves $c_{J}^{-}\leq c_{J}$. The inequality $c_{J}\leq c_{J}^{+}$ can be
proved by a dual argument.

Finally, to prove \ref{item pointwise in thm all solutions over B}$\implies
$\ref{item solution in thm all solutions over B}, let us assume that
$p^{-}\leq p\leq p^{+}$ holds in the pointwise ordering of functions. Applying
Lemma~\ref{lemma estimating p+} and Lemma~\ref{lemma estimating p-}, we get
the following chain of inequalities for every $I\subseteq\left[  n\right]  $:%
\[
f\left(  \widehat{\mathbf{e}}_{I}\right)  \leq p^{-}\left(  \widehat
{\mathbf{e}}_{I}\right)  \leq p\left(  \widehat{\mathbf{e}}_{I}\right)  \leq
p^{+}\left(  \widehat{\mathbf{e}}_{I}\right)  \leq f\left(  \widehat
{\mathbf{e}}_{I}\right)  .
\]
This implies $p\left(  \widehat{\mathbf{e}}_{I}\right)  =f\left(
\widehat{\mathbf{e}}_{I}\right)  $ for all $I\subseteq\left[  n\right]  $,
therefore we have $p|_{D}=f$.
\end{proof}

Note that in Lemma~\ref{lemma star is necessary} we did not make use of the
fact that $p$ is a polynomial function over $L$: the proof works also for
polynomial functions over $B$. This fact together with
Theorem~\ref{thm all solutions over B} shows that monotonicity and property
(\ref{eq star}) of $f$ are necessary and sufficient for the existence of a
solution of our interpolation problem over $B$. This observation leads to the
following result.

\begin{theorem}
\label{thm all solutions over L}The Interpolation Problem has a solution if
and only if $f$ is monotone and satisfies {\upshape{(\ref{eq star})}}. In this
case a polynomial function $p$ over $L$ verifies $p|_{D}=f$ if and only if
$c_{I}^{-}\leq c_{I}\leq c_{I}^{+}$ holds for the coefficients $c_{I}$ of the
DNF of $p$ for all $I\subseteq\left[  n\right]  $. In particular, $p$ can be
chosen as the polynomial function $p_{0}$ given by the coefficients
$c_{I}=f\left(  \widehat{\mathbf{e}}_{I}\right)  $:%
\[
p_{0}\left(  \mathbf{x}\right)  =\bigvee_{I\subseteq\left[  n\right]  }%
\bigl(%
f\left(  \widehat{\mathbf{e}}_{I}\right)  \wedge\bigwedge_{i\in I}x_{i}%
\bigr)%
\text{.}%
\]

\end{theorem}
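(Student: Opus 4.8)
The plan is to transfer the characterization already obtained over the Boolean algebra $B$ (Theorem~\ref{thm all solutions over B}) down to the lattice $L$, using the two boundary polynomial functions $p^{-}$ and $p^{+}$ as a ``sandwich''. First I would observe that necessity is already done: by the remark preceding the statement, Lemma~\ref{lemma star is necessary} applies verbatim to polynomial functions over $B$, so if the Interpolation Problem has a solution $p$ over $L$ — which is in particular a polynomial function over $B$ once we regard its coefficients as elements of $B\cup\{0,1\}$ — then $f$ must be monotone and satisfy \eqref{eq star}. So assume from now on that $f$ is monotone and satisfies \eqref{eq star}.

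For the ``if'' direction I would split into the claim about $p_0$ and the claim about the full solution set. The polynomial function $p_0$ has all its coefficients equal to $f(\widehat{\mathbf e}_I)\in L$, so it really is a polynomial function over $L$ (the side conditions $c_\emptyset\ne 1$, $c_{[n]}\ne 0$ hold automatically since the coefficients lie in $L$). Monotonicity of the coefficients $f(\widehat{\mathbf e}_I)$ is exactly monotonicity of $f$. By construction $c_I^{-}=f(\widehat{\mathbf e}_I)\wedge\bigwedge_{i\notin I}a_i'\le f(\widehat{\mathbf e}_I)\le f(\widehat{\mathbf e}_I)\vee\bigvee_{i\in I}b_i'=c_I^{+}$, so $p_0$ satisfies condition~\ref{item coefficientwise in thm all solutions over B} of Theorem~\ref{thm all solutions over B}; hence $p_0|_D=f$, establishing that a solution over $L$ exists.

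Now for the description of all solutions over $L$: let $p$ be \emph{any} polynomial function over $L$. Its DNF coefficients $c_I$ lie in $L\cup\{0,1\}\subseteq B$, so $p$ may also be read as a polynomial function over $B$, and its values on $D\subseteq L^n$ are the same whether computed in $L$ or in $B$ (the lattice operations of $L$ are the restrictions of those of $B$). Therefore $p|_D=f$ holds over $L$ if and only if it holds over $B$, and by the equivalence \ref{item solution in thm all solutions over B}$\Leftrightarrow$\ref{item coefficientwise in thm all solutions over B} of Theorem~\ref{thm all solutions over B} this happens exactly when $c_I^{-}\le c_I\le c_I^{+}$ for all $I\subseteq[n]$. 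This gives both directions of the stated criterion.

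The one point that genuinely needs care — and which I expect to be the only real obstacle — is the claim that $p|_D$ is computed the same way in $L$ and in $B$, i.e.\ that the Birkhoff--Priestley embedding $L\hookrightarrow B$ is a lattice embedding so that finite meets and joins (including those involving the external bounds $0,1$) are preserved; this is exactly the canonicity setup recalled before Lemma~\ref{lemma estimating p+}, so it can be invoked rather than reproved. A secondary, purely bookkeeping point is that a system of coefficients $c_I\in L\cup\{0,1\}$ with $c_I^{-}\le c_I\le c_I^{+}$ indeed defines a \emph{polynomial function over $L$}, which requires $c_\emptyset\ne 1$ and $c_{[n]}\ne 0$: from $c_\emptyset\le c_\emptyset^{+}=f(\widehat{\mathbf e}_\emptyset)\in L$ we get $c_\emptyset\ne 1$, and dually $c_{[n]}\ge c_{[n]}^{-}=f(\widehat{\mathbf e}_{[n]})\in L$ gives $c_{[n]}\ne 0$, so this is immediate. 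With these two observations in place the theorem follows directly from Theorem~\ref{thm all solutions over B}.
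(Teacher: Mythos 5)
Your proposal is correct and follows essentially the same route as the paper: necessity via Lemma~\ref{lemma star is necessary}, sufficiency by checking that $p_0$ satisfies $c_I^-\le f(\widehat{\mathbf e}_I)\le c_I^+$ and invoking Theorem~\ref{thm all solutions over B}, and the description of all solutions by reading polynomial functions over $L$ as polynomial functions over $B$. The extra bookkeeping you supply (the embedding preserving evaluation on $D$, and the conditions $c_\emptyset\neq 1$, $c_{[n]}\neq 0$) is correct and only makes explicit what the paper leaves implicit.
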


\begin{proof}
The necessity of the conditions has been established in
Lemma~\ref{lemma star is necessary}. To prove the sufficiency, we just need to
observe that if $f$ is monotone and satisfies (\ref{eq star}), then the
polynomial function $p_{0}$ is a solution of the Interpolation Problem by
Theorem~\ref{thm all solutions over B}, as $c_{I}^{-}\leq f\left(
\widehat{\mathbf{e}}_{I}\right)  \leq c_{I}^{+}$ follows immediately from the
definition of $c_{I}^{-}$ and $c_{I}^{+}$. Since $f\left(  \widehat
{\mathbf{e}}_{I}\right)  \in L$ for all $I\subseteq\left[  n\right]  $, the
polynomial function $p_{0}$ is actually a polynomial function over $L$. The
description of the set of all solutions over $L$ also follows from
Theorem~\ref{thm all solutions over B}.
\end{proof}

Let us note that if $L$ is bounded and $a_{i}=0,b_{i}=1$ for all $i\in\left[
n\right]  $, then Theorem~\ref{thm all solutions over L} reduces to
Goodstein's theorem. Indeed, in this case (\ref{eq star}) holds trivially,
hence a solution exists if and only if $f$ is monotone. Moreover, we have
$c_{I}^{-}=c_{I}^{+}=f\left(  \widehat{\mathbf{e}}_{I}\right)  $, hence
$p_{0}$ (which is the same as the polynomial function given in
Theorem~\ref{thm goodstein}) is the only solution of the Interpolation Problem.

\section{Variations\label{sect concluding remarks}}

We have seen that monotonicity and property (\ref{eq star}) are necessary and
sufficient to guarantee the existence of a solution of the Interpolation
Problem. The following example shows that these two conditions are
independent, hence neither of them can be dropped.

\begin{example}
Let $L$ be a distributive lattice, let $a,b,c\in L$ such that $a<b<c$, and let
$D=\left\{  a,b\right\}  $. Then the function $f\colon D\rightarrow L$ defined
by $f\left(  a\right)  =b$, $f\left(  b\right)  =a$ satisfies (\ref{eq star})
but it is not monotone, while the function $g\colon D\rightarrow L$ defined by
$g\left(  a\right)  =a$, $g\left(  b\right)  =c$ is monotone but it does not
satisfy (\ref{eq star}).
\end{example}

Considering polynomial functions over the Boolean algebra $B$ generated by
$L$, the Interpolation Problem has a least and a greatest solution, namely
$p^{-}$ and $p^{+}$, whenever a solution exists (see
Theorem~\ref{thm all solutions over B}). On the other hand, the instance of
the Interpolation Problem considered in
Example~\ref{ex no least/greatest solution} has no least solution over $L$
itself (since usually there is no least open set containing $b\setminus a$),
and a dual example shows that in general there is no greatest solution over
$L$. However, if $L$ is complete, then extremal solutions exist over $L$. To
describe these, let us introduce the following notation. For an arbitrary
$b\in B$, we define the elements $\mathrm{cl}\left(  b\right)  $ and
$\mathrm{int}\left(  b\right)  $ of $L$ by%
\[
\mathrm{cl}\left(  b\right)  :=\bigwedge_{\substack{a\in L\\a\geq b}%
}a\quad\text{and}\quad\mathrm{int}\left(  b\right)  :=\bigvee_{\substack{a\in
L\\a\leq b}}a.
\]
Completeness of $L$ ensures that these (possibly infinite) meets and joins
exist, and one can verify that $\mathrm{cl}$ is a closure operator on $B$ (the
closed elements being exactly the elements of $L$), while $\mathrm{int}$ is
the dual closure operator on $B$ (also called as \textquotedblleft interior
operator\textquotedblright).

\begin{theorem}
\label{thm complete lattices}If $L$ is a complete distributive lattice, then a
polynomial function $p$ over $L$ is a solution of the Interpolation Problem if
and only if $\mathrm{cl}\left(  c_{I}^{-}\right)  \leq c_{I}\leq
\mathrm{int}\left(  c_{I}^{+}\right)  $ holds for the coefficients $c_{I}$ of
the DNF of $p$, for all $I\subseteq\left[  n\right]  $.
\end{theorem}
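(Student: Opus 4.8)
The plan is to derive the statement from Theorem~\ref{thm all solutions over L} by rewriting the coefficient inequalities appearing there in terms of the operators $\mathrm{cl}$ and $\mathrm{int}$. First I would record that a \emph{complete} lattice has a least and a greatest element, so $L$ is a bounded distributive lattice; consequently the coefficients $c_{I}$ of the DNF of any polynomial function $p$ over $L$ are themselves elements of $L$ (there is no need to resort to the external bounds $0,1$). Working in the situation of interest, i.e.\ assuming that $f$ is monotone and satisfies~(\ref{eq star}) --- otherwise the Interpolation Problem has no solution at all by Lemma~\ref{lemma star is necessary} --- Theorem~\ref{thm all solutions over L} tells us that a polynomial function $p$ over $L$ is a solution precisely when $c_{I}^{-}\le c_{I}\le c_{I}^{+}$ holds in $B$ for every $I\subseteq[n]$. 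Thus it suffices to prove, for a fixed $I$ and an arbitrary $c\in L$, the equivalence
\[
c_{I}^{-}\le c\le c_{I}^{+}\qquad\Longleftrightarrow\qquad\mathrm{cl}(c_{I}^{-})\le c\le\mathrm{int}(c_{I}^{+}).
\]

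To this end I would establish the following two facts, valid for every $b\in B$ and every $c\in L$: \textup{(i)}~$b\le c$ if and only if $\mathrm{cl}(b)\le c$; and \textup{(ii)}~$c\le b$ if and only if $c\le\mathrm{int}(b)$. For~\textup{(i)}: if $\mathrm{cl}(b)\le c$, then $b\le\mathrm{cl}(b)\le c$ because $\mathrm{cl}$, being a closure operator on $B$, is extensive; conversely, if $b\le c$, then $c$ lies in the set $\{a\in L:a\ge b\}$ over which the meet defining $\mathrm{cl}(b)$ is taken, so $\mathrm{cl}(b)\le c$. Assertion~\textup{(ii)} follows by the order-dual argument, using that $\mathrm{int}$ is the dual closure operator (hence deflationary) and that $\mathrm{int}(b)=\bigvee\{a\in L:a\le b\}$; here completeness of $L$ is exactly what guarantees that the meet and join in question exist, as recalled just before the theorem. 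Applying~\textup{(i)} with $b=c_{I}^{-}$, $c=c_{I}$ and~\textup{(ii)} with $b=c_{I}^{+}$, $c=c_{I}$ then gives the displayed equivalence for each $I$; conjoining over all $I\subseteq[n]$ and combining with Theorem~\ref{thm all solutions over L} yields the theorem.

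The step I expect to require the most care is the passage back from the $\mathrm{cl}/\mathrm{int}$ inequalities to a genuine solution: what makes \textup{(i)} and~\textup{(ii)} \emph{reversible} --- so that the two conditions are equivalent rather than merely one implying the other --- is precisely that $c=c_{I}$ lies in $L$, i.e.\ is a point that is at once fixed by $\mathrm{cl}$ (it is ``closed'') and by $\mathrm{int}$ (it is ``open''). For this reason the seemingly trivial opening remark that completeness forces $L$ to be bounded, hence $c_{I}\in L$, is load-bearing and must be made explicit. A secondary point, arising only if one reads Theorem~\ref{thm complete lattices} without the standing hypothesis on $f$, is to check that when $f$ is not monotone or fails~(\ref{eq star}) the inequalities $\mathrm{cl}(c_{I}^{-})\le c_{I}\le\mathrm{int}(c_{I}^{+})$ cannot hold simultaneously for any DNF coefficients $c_{I}\in L$; this merely re-expresses the unsolvable case of Theorem~\ref{thm all solutions over L} and contributes nothing to the main line of argument.
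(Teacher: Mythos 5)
Your proof is correct and takes essentially the same route as the paper's: both deduce the theorem from Theorem~\ref{thm all solutions over L} by observing that, for a coefficient $c_{I}\in L$, the inequalities $c_{I}^{-}\leq c_{I}\leq c_{I}^{+}$ are equivalent to $\mathrm{cl}\left(c_{I}^{-}\right)\leq c_{I}\leq\mathrm{int}\left(c_{I}^{+}\right)$ by the very definition of $\mathrm{cl}$ and $\mathrm{int}$. You merely spell out the two adjunction-type equivalences and the (correct, and worth noting) remark that completeness makes $L$ bounded so that the coefficients lie in $L$ --- details the paper leaves implicit.
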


\begin{proof}
Theorem~\ref{thm complete lattices} follows directly from
Theorem~\ref{thm all solutions over L}, since, by the very definition of
$\mathrm{cl}$ and $\mathrm{int}$, we have that $c_{I}^{-}\leq c_{I}\leq
c_{I}^{+}$ holds for a given $c_{I}\in L$ if and only if $\mathrm{cl}\left(
c_{I}^{-}\right)  \leq c_{I}\leq\mathrm{int}\left(  c_{I}^{+}\right)  $.
\end{proof}

Now let us consider a general version of the Interpolation Problem, where $D$
is an arbitrary subset of $L^{n}$, not necessarily the set of vertices of a
rectangular box. This problem is still open for distributive lattices;
however, for finite chains the solution has been given in \cite{RG}. That
paper deals with Sugeno integrals (cf. Section~\ref{sect decision}) instead of
lattice polynomials; here we reformulate the criterion for the existence of a
solution (Theorem~3 in \cite{RG}) in the language of lattice theory.

\begin{theorem}
[\cite{RG}]\label{thm Rico-Grabisch}Let $L$ be a finite chain, and let $D$ be
an arbitrary subset of $L^{n}$. A function $f\colon D\rightarrow L$ extends to
a lattice polynomial function on $L$ if and only if%
\begin{equation}
\forall\mathbf{a},\mathbf{b}\in D:~f\left(  \mathbf{a}\right)  <f\left(
\mathbf{b}\right)  \implies\exists i\in\left[  n\right]  :a_{i}\leq f\left(
\mathbf{a}\right)  <f\left(  \mathbf{b}\right)  \leq b_{i}. \label{eq RG}%
\end{equation}

\end{theorem}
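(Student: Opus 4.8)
The plan is to deduce Theorem~\ref{thm Rico-Grabisch} from the work we have already set up, treating both directions separately; the novelty being purely that $D$ is now an arbitrary subset rather than the vertex set of a cuboid. For the forward direction, suppose a lattice polynomial function $p$ extends $f$, and pick $\mathbf{a},\mathbf{b}\in D$ with $f(\mathbf{a})<f(\mathbf{b})$. Consider the DNF $p(\mathbf{x})=\bigvee_{I\subseteq[n]}(c_I\wedge\bigwedge_{i\in I}x_i)$ with monotone coefficients. Since $f(\mathbf{b})=p(\mathbf{b})=\bigvee_I(c_I\wedge\bigwedge_{i\in I}b_i)>f(\mathbf{a})$, and $L$ is a chain, there must be a single set $I_0$ with $c_{I_0}\wedge\bigwedge_{i\in I_0}b_i=f(\mathbf{b})$ (a join over a chain is attained). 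On the other hand $p(\mathbf{a})\ge c_{I_0}\wedge\bigwedge_{i\in I_0}a_i$, and since $f(\mathbf{a})<f(\mathbf{b})\le c_{I_0}$ and $f(\mathbf{a})<f(\mathbf{b})\le b_i$ for all $i\in I_0$, I want to extract one coordinate $i\in I_0$ with $a_i\le f(\mathbf{a})$: if $a_i>f(\mathbf{a})$ for every $i\in I_0$ then, using the chain structure, $a_i\ge f(\mathbf{b})$ for each such $i$, hence $c_{I_0}\wedge\bigwedge_{i\in I_0}a_i\ge f(\mathbf{b})$, forcing $p(\mathbf{a})\ge f(\mathbf{b})$, a contradiction. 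So some $i\in I_0$ satisfies $a_i\le f(\mathbf{a})<f(\mathbf{b})\le b_i$, which is exactly \eqref{eq RG}.

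For the converse, assume \eqref{eq RG}. The natural strategy is to produce an explicit polynomial function in DNF and verify it interpolates $f$. A good candidate, modelled on $p_0$ from Theorem~\ref{thm all solutions over L} but adapted to an arbitrary domain, is
\[
p(\mathbf{x})=\bigvee_{\mathbf{a}\in D}\Bigl(f(\mathbf{a})\wedge\bigwedge_{i:\,a_i\le f(\mathbf{a})}x_i\Bigr),
\]
i.e.\ we collect the coefficients $c_I=\bigvee\{f(\mathbf{a}):\mathbf{a}\in D,\ I=\{i:a_i\le f(\mathbf{a})\}\}$ (setting missing coefficients to $0$), then close up under monotonicity. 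One checks first that $p(\mathbf{b})\ge f(\mathbf{b})$ for each $\mathbf{b}\in D$: the joinand indexed by $\mathbf{a}=\mathbf{b}$ is $f(\mathbf{b})\wedge\bigwedge_{i:\,b_i\le f(\mathbf{b})}b_i=f(\mathbf{b})$. For the reverse inequality $p(\mathbf{b})\le f(\mathbf{b})$, one estimates a typical joinand $f(\mathbf{a})\wedge\bigwedge_{i:\,a_i\le f(\mathbf{a})}b_i$: if this exceeds $f(\mathbf{b})$ then in particular $f(\mathbf{a})>f(\mathbf{b})$ (since $L$ is a chain), so applying \eqref{eq RG} with the roles of $\mathbf{a},\mathbf{b}$ reversed yields an index $i$ with $b_i\le f(\mathbf{b})<f(\mathbf{a})\le a_i$; but then $a_i>f(\mathbf{a})$, so $i$ is not one of the coordinates in the meet, and nonetheless $b_i\le f(\mathbf{b})$ shows... — here one must be slightly careful, and instead run the argument as: the joinand is $\le f(\mathbf{a})$; if $f(\mathbf{a})\le f(\mathbf{b})$ we are done, otherwise \eqref{eq RG} gives $i$ with $a_i\le f(\mathbf{a})<f(\mathbf{b})\le b_i$ so $i$ is in the index set of the meet, and then the joinand is $\le b_i$... which is $\ge f(\mathbf{b})$, not helpful. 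The correct dual index is the one just extracted with $b_i\le f(\mathbf{b})$: since $i$ occurs in the meet exactly when $a_i\le f(\mathbf{a})$, and \eqref{eq RG} applied to the pair $(\mathbf{b},\mathbf{a})$ produces $i$ with $b_i\le f(\mathbf{b})<f(\mathbf{a})\le a_i$, this $i$ satisfies $a_i\ge f(\mathbf{a})$, hence $a_i\le f(\mathbf{a})$ would force $a_i=f(\mathbf{a})$; one then shows the meet picks up $b_i\le f(\mathbf{b})$ after all. I will need to sort out this bookkeeping precisely.

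The step I expect to be the genuine obstacle is exactly that second half of the converse: producing the right DNF and showing the interpolating polynomial does not overshoot $f(\mathbf{b})$ at points of $D$. The forward direction and the "$\ge$" half of the converse are essentially formal manipulations of the DNF using the chain hypothesis. The "$\le$" half is where condition \eqref{eq RG} does its real work, and the argument must carefully match each offending joinand to a witnessing coordinate supplied by \eqref{eq RG}; the subtlety is that the same index $i$ must simultaneously lie in (or be correctly handled by) the meet $\bigwedge_{i:\,a_i\le f(\mathbf{a})}x_i$ and bound the value below $f(\mathbf{b})$. An alternative, possibly cleaner, route is to reduce to the cuboid case already solved: restrict attention to finitely many relevant "thresholds" and embed $D$ into the vertex set of a suitable box, then invoke Theorem~\ref{thm all solutions over L} — but verifying that \eqref{eq RG} translates into monotonicity plus \eqref{eq star} for the extended data is itself nontrivial, so I would first attempt the direct DNF construction above and fall back on the reduction only if the bookkeeping proves unwieldy.
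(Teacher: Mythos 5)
First, a remark on the comparison you asked for: the paper gives \emph{no} proof of Theorem~\ref{thm Rico-Grabisch} --- it is imported from \cite{RG} (Theorem~3 there) and merely restated in lattice-theoretic language. So your attempt has to stand on its own, and it does not quite.

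Your forward direction is essentially right but contains one false step: from $a_i>f(\mathbf{a})$ for all $i\in I_0$ you infer $a_i\ge f(\mathbf{b})$, which fails in general (take $L=\{0<1<2\}$, $f(\mathbf{a})=0$, $f(\mathbf{b})=2$, $a_i=1$). The repair is immediate: in a finite chain the meet of finitely many elements each strictly above $f(\mathbf{a})$ is still strictly above $f(\mathbf{a})$, so already $p(\mathbf{a})\ge c_{I_0}\wedge\bigwedge_{i\in I_0}a_i>f(\mathbf{a})$ contradicts $p(\mathbf{a})=f(\mathbf{a})$. (You should also dispose of $I_0=\emptyset$, which gives $p(\mathbf{a})\ge c_{\emptyset}=f(\mathbf{b})$, a contradiction.)

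The genuine gap is in the converse, and it comes from the inequality in your index set being the wrong way around. With $\bigwedge_{i:\,a_i\le f(\mathbf{a})}x_i$, the very first verification fails: the joinand at $\mathbf{a}=\mathbf{b}$ is $f(\mathbf{b})\wedge\bigwedge_{i:\,b_i\le f(\mathbf{b})}b_i$, which is \emph{strictly below} $f(\mathbf{b})$ as soon as some $b_i<f(\mathbf{b})$, so you do not even get $p(\mathbf{b})\ge f(\mathbf{b})$; and, as you yourself notice, the witness $i$ produced by (\ref{eq RG}) for the pair $(\mathbf{b},\mathbf{a})$ satisfies $a_i\ge f(\mathbf{a})$ and so is not captured by your meet, which is why your bookkeeping refuses to close. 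The construction that works is
$p(\mathbf{x})=\bigvee_{\mathbf{a}\in D}\bigl(f(\mathbf{a})\wedge\bigwedge_{i:\,a_i\ge f(\mathbf{a})}x_i\bigr)$.
Then the joinand at $\mathbf{a}=\mathbf{b}$ equals $f(\mathbf{b})$ exactly, giving $p(\mathbf{b})\ge f(\mathbf{b})$; for the upper bound, a joinand indexed by $\mathbf{a}$ with $f(\mathbf{a})\le f(\mathbf{b})$ is trivially $\le f(\mathbf{b})$, while if $f(\mathbf{b})<f(\mathbf{a})$ then (\ref{eq RG}) applied to $(\mathbf{b},\mathbf{a})$ yields $i$ with $b_i\le f(\mathbf{b})<f(\mathbf{a})\le a_i$; this $i$ \emph{does} lie in $\{j:a_j\ge f(\mathbf{a})\}$, so the joinand is $\le b_i\le f(\mathbf{b})$. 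With that single sign flip your intended argument goes through completely; as written, the converse is not proved.
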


Let us explore the relationship between Theorem~\ref{thm Rico-Grabisch} and
Theorem~\ref{thm all solutions over L}. Our condition (\ref{eq star}) is
defined only for sets $D$ of the form $D=\left\{  \widehat{\mathbf{e}}%
_{I}:I\subseteq\left[  n\right]  \right\}  $, whereas (\ref{eq RG}) can be
interpreted for any set $D\subseteq L^{n}$ for any distributive lattice $L$.
Hence it is natural to ask whether Theorem~\ref{thm Rico-Grabisch} remains
valid for arbitrary distributive lattices. As the following example shows, if
$L$ is not a chain, then it can be the case that (\ref{eq RG}) is neither
sufficient nor necessary for the existence of a solution of the Interpolation
Problem, not even for the special kind of sets $D$ that we considered in this paper.

\begin{example}
\label{ex RG neither sufficient nor necessary}Let $L=\{0,1,a,b\}$ with
$0<a,b<1$ and $a,b$ incomparable. Let $n=1$ and $D=\{0,b\}$, and define
$f\colon D\rightarrow L$ by $f(0)=b$, $f(b)=a$ and $g\colon D\rightarrow L$ by
$g(0)=a$, $g(b)=1$ Then $f$ trivially satisfies (\ref{eq RG}), but $f$ is not
monotone, hence it is not the restriction of any polynomial function. On the
other hand, $g$ does not satisfy (\ref{eq RG}), although it is the restriction
of the polynomial function $p\left(  x\right)  =x\vee a$ to $D$.
\end{example}

Observe that if $L$ is a chain, then (\ref{eq RG}) implies that $f$ is
monotone\footnote{Of course, this follows from Theorem~\ref{thm Rico-Grabisch}%
, but it is also easy to verify directly.}, but this is not true for arbitrary
distributive lattices (see the example above). Thus we may want to require
that $f$ is a monotone function satisfying (\ref{eq RG}). We will prove below
that if $D$ is of \textquotedblleft rectangular\textquotedblright\ shape, then
monotonicity of $f$ and condition (\ref{eq RG}) are sufficient to ensure that
$f$ extends to a polynomial function (but (\ref{eq RG}) is not necessary, as
we have seen in Example~\ref{ex RG neither sufficient nor necessary}).

\begin{proposition}
\label{prop RG and monotone imply star}Let $L$ be a distributive lattice and
$D=\left\{  \widehat{\mathbf{e}}_{I}\colon I\subseteq\left[  n\right]
\right\}  $ as in the Interpolation\ Problem. If $f\colon D\rightarrow L$ is
monotone and satisfies \textup{(\ref{eq RG})}, then there exists a polynomial
function $p$ over $L$ such that $p|_{D}=f$.
\end{proposition}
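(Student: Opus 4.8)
By Theorem~\ref{thm all solutions over L}, it suffices to show that $f$ satisfies condition~(\ref{eq star}); monotonicity is already assumed. So the plan is to fix $I\subseteq\left[n\right]$ and $k\in\left[n\right]$ and derive the two inequalities of (\ref{eq star}) from (\ref{eq RG}) together with monotonicity. By the symmetry noted in the paper (one inequality is trivial when $k\in I$ and the other when $k\notin I$), it is enough to treat the case $k\notin I$ and prove $f\left(\widehat{\mathbf{e}}_{I\cup\left\{k\right\}}\right)\wedge a_{k}\leq f\left(\widehat{\mathbf{e}}_{I}\right)$; the remaining inequality follows by a dual argument (or by applying the already-proved half to the order-reversed lattice).

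First I would set $\mathbf{a}=\widehat{\mathbf{e}}_{I}$ and $\mathbf{b}=\widehat{\mathbf{e}}_{I\cup\left\{k\right\}}$, so that $\mathbf{a}\leq\mathbf{b}$ and hence $f\left(\mathbf{a}\right)\leq f\left(\mathbf{b}\right)$ by monotonicity. If $f\left(\mathbf{a}\right)=f\left(\mathbf{b}\right)$, then $f\left(\widehat{\mathbf{e}}_{I\cup\left\{k\right\}}\right)\wedge a_{k}\leq f\left(\widehat{\mathbf{e}}_{I\cup\left\{k\right\}}\right)=f\left(\widehat{\mathbf{e}}_{I}\right)$ and we are done. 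Otherwise $f\left(\mathbf{a}\right)<f\left(\mathbf{b}\right)$, and (\ref{eq RG}) supplies an index $i\in\left[n\right]$ with $a_{i}\leq f\left(\mathbf{a}\right)<f\left(\mathbf{b}\right)\leq b_{i}$, where here $a_{i}$ denotes the $i$-th coordinate of $\mathbf{a}=\widehat{\mathbf{e}}_{I}$ and $b_{i}$ the $i$-th coordinate of $\mathbf{b}=\widehat{\mathbf{e}}_{I\cup\left\{k\right\}}$ (I should be careful to distinguish these coordinate values from the lattice elements $a_{i},b_{i}$ defining $D$, though of course they coincide with one of $a_{i},b_{i}$).

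The crux is then a short case analysis on whether this index $i$ equals $k$ or not. If $i=k$, then the $k$-th coordinate of $\mathbf{a}=\widehat{\mathbf{e}}_{I}$ is $a_{k}$ (since $k\notin I$), so (\ref{eq RG}) gives $a_{k}\leq f\left(\widehat{\mathbf{e}}_{I}\right)$, whence $f\left(\widehat{\mathbf{e}}_{I\cup\left\{k\right\}}\right)\wedge a_{k}\leq a_{k}\leq f\left(\widehat{\mathbf{e}}_{I}\right)$. If $i\neq k$, then $i$ lies in $I$ or not in $I\cup\left\{k\right\}$, and in either case the $i$-th coordinates of $\mathbf{a}$ and $\mathbf{b}$ agree; writing $x$ for this common value we get $x\leq f\left(\mathbf{a}\right)$ from (\ref{eq RG}), but more to the point we get $f\left(\widehat{\mathbf{e}}_{I\cup\left\{k\right\}}\right)=f\left(\mathbf{b}\right)\leq b_{i}=x\leq f\left(\mathbf{a}\right)=f\left(\widehat{\mathbf{e}}_{I}\right)$, so in fact $f\left(\widehat{\mathbf{e}}_{I}\right)=f\left(\widehat{\mathbf{e}}_{I\cup\left\{k\right\}}\right)$ and the inequality is immediate. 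Thus (\ref{eq star}) holds, and Theorem~\ref{thm all solutions over L} yields the desired polynomial function $p$. The main thing to get right is the bookkeeping between the abstract tuples $\mathbf{a},\mathbf{b}$ in (\ref{eq RG}) and the specific characteristic vectors $\widehat{\mathbf{e}}_{I},\widehat{\mathbf{e}}_{I\cup\left\{k\right\}}$, in particular keeping track of which coordinate the witness index $i$ picks out; I expect no real difficulty beyond this.
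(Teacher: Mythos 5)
Your proposal is correct and follows essentially the same route as the paper's proof: reduce to verifying (\ref{eq star}) via Theorem~\ref{thm all solutions over L}, treat the case $k\notin I$, and analyze the witness index $i$ from (\ref{eq RG}). The only cosmetic difference is that in the case $i\neq k$ the paper declares that case impossible (since the $i$-th coordinates of the two tuples coincide), while you note that the desired inequality follows anyway; both are valid.
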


\begin{proof}
Let $f\colon D\rightarrow L$ be a monotone function satisfying (\ref{eq RG}).
By Theorem~\ref{thm all solutions over L}, we only have to prove that $f$ also
satisfies (\ref{eq star}). Let us assume that $k\notin I$; the other case is
similar. Then only $f\left(  \widehat{\mathbf{e}}_{I\cup\left\{  k\right\}
}\right)  \wedge a_{k}\leq f\left(  \widehat{\mathbf{e}}_{I}\right)  $ needs
to be verified, as the second inequality of (\ref{eq star}) is trivial in this
case. Since $f$ is monotone, we have $f\left(  \widehat{\mathbf{e}}%
_{I}\right)  \leq f\left(  \widehat{\mathbf{e}}_{I\cup\left\{  k\right\}
}\right)  $, and if equality holds here, then we are done. On the other hand,
if $f\left(  \widehat{\mathbf{e}}_{I}\right)  <f\left(  \widehat{\mathbf{e}%
}_{I\cup\left\{  k\right\}  }\right)  $, then (\ref{eq RG}) implies that there
is an $i\in\left[  n\right]  $ such that%
\begin{equation}
\left(  \widehat{\mathbf{e}}_{I}\right)  _{i}\leq f\left(  \widehat
{\mathbf{e}}_{I}\right)  <f\left(  \widehat{\mathbf{e}}_{I\cup\left\{
k\right\}  }\right)  \leq\left(  \widehat{\mathbf{e}}_{I\cup\left\{
k\right\}  }\right)  _{i}. \label{eq RG and monotone implies star}%
\end{equation}
This is clearly impossible for $i\neq k$, since then the $i$-th component of
$\widehat{\mathbf{e}}_{I}$ and $\widehat{\mathbf{e}}_{I\cup\left\{  k\right\}
}$ is the same (namely, $a_{i}$). Thus we must have $i=k$, and then
(\ref{eq RG and monotone implies star}) reads as%
\[
a_{k}\leq f\left(  \widehat{\mathbf{e}}_{I}\right)  <f\left(  \widehat
{\mathbf{e}}_{I\cup\left\{  k\right\}  }\right)  \leq b_{k}.
\]
From this we immediately obtain the desired inequality:%
\[
f\left(  \widehat{\mathbf{e}}_{I\cup\left\{  k\right\}  }\right)  \wedge
a_{k}\leq a_{k}\leq f\left(  \widehat{\mathbf{e}}_{I}\right)  . \qedhere
\]

\end{proof}

Finally, we give an example that shows that monotonicity and condition
(\ref{eq RG}) together do not guarantee the existence of a solution of the
Interpolation Problem if $L$ is an arbitrary distributive lattice and $D$ is
an arbitrary subset of $L^{n}$. Thus it remains as a topic of further research
to find an appropriate criterion for the existence of an interpolating lattice
polynomial function in this general setting.

\begin{example}
\label{ex RG and monotone not sufficient}Let $L$ be the same lattice as in
Example~\ref{ex RG neither sufficient nor necessary}, and let $D=\left\{
a,b\right\}  $. Then the function $f\colon D\rightarrow L$ defined by
$f(a)=b$, $f(b)=a$ is monotone and satisfies (\ref{eq RG}), but it is not the
restriction of a polynomial function.
\end{example}

\section{Application in decision making\label{sect decision}}

The original motivation for considering the Interpolation Problem lies in the
following mathematical model of multicriteria decision making. Let us assume
that we have a set of alternatives from which we would like to choose the best
one (e.g., a house to buy). Several properties of these alternatives could be
important in making the decision (e.g., the size, price, etc., of a house),
and this very fact can make the decision difficult (for instance, maybe it is
not clear whether a cheap and small house is better than a big and expensive
one). To overcome this difficulty, the values corresponding to the various
properties of each alternative should be combined to a single value, which can
then be easily compared.

To formalize this situation, let us assume that there are $n$ criteria along
which the alternatives are evaluated, and these take their values in linearly
ordered sets $L_{1},\ldots,L_{n}$. These linearly ordered sets could be
quantitative scales (e.g., $L_{1}$ could be the real interval $\left[
40,200\right]  $, measuring the size of a house in square meters) or
qualitative scales (e.g., $L_{1}$ could be the finite chain $\left\{
\text{very small}<\text{small}<\text{big}<\text{very big}\right\}  $). Thus,
to each alternative corresponds a profile $\mathbf{x}\in L_{1}\times
\cdots\times L_{n}$.\ Since this product is usually not a linearly ordered
set, some alternatives may be incomparable. Therefore, we choose a common
scale $L$, and monotone functions $\varphi_{i}\colon L_{i}\rightarrow L\left(
i\in\left[  n\right]  \right)  $ to translate the values corresponding to the
different criteria (which may have different units of measure, e.g., square
meters, euros, etc.) to this common scale, and which are then combined into a
single value (for each alternative) by a so-called aggregation function
$p\colon L^{n}\rightarrow L$. In this way we obtain a function $U\colon
L_{1}\times\cdots\times L_{n}\rightarrow L$ defined by%
\begin{equation}
U\left(  \mathbf{x}\right)  =p\left(  \varphi_{1}\left(  x_{1}\right)
,\ldots,\varphi_{n}\left(  x_{n}\right)  \right)  ,
\label{eq pseudo-polynomial}%
\end{equation}
and we can choose the alternative that maximizes $U$. The function $U$ is
called a global utility function, whereas the maps $\varphi_{i}$ are called
local utility functions. The relevance of such functions is attested by their
many applications in decision making, in particular, in representing
preference relations \cite{BouDubPraPir09}.

It is common to choose the real interval $\left[  0,1\right]  $ for $L$, and
consider $\varphi_{i}\left(  x_{i}\right)  $ as a kind of \textquotedblleft
score\textquotedblright\ with respect to the $i$-th criterion. In this case,
simple aggregation functions $p$ are for instance the weighted arithmetic
means, but there are of course other, more elaborate ways of aggregating the
scores such as the so-called Choquet integrals. However, in the qualitative
approach, where only the ordering between scores is taken into account (for
instance, when $L=\left\{  \text{bad}<\text{OK}<\text{good}<\text{excellent}%
\right\}  $), such operators are of little use since they rely heavily on the
arithmetic structure of the real unit interval. In the latter setting, one of
the most prominent class of aggregation functions is that of discrete Sugeno
integrals, which coincides with the class of idempotent lattice polynomial
functions (see \cite{Mar09}).

In \cite{CW3} and \cite{Belfast} a more general situation was considered: $L$
is an arbitrary finite distributive lattice, the lattice polynomial functions
are not assumed to be idempotent, and the local utility functions are not
assumed to be monotone (instead they have to satisfy the boundary conditions
$\varphi_{i}\left(  0_{i}\right)  \leq\varphi_{i}\left(  x_{i}\right)
\leq\varphi_{i}\left(  1_{i}\right)  $ for all $x_{i}\in L_{i}$, where $0_{i}$
and $1_{i}$ denote the least and greatest element of $L_{i}$). The
corresponding compositions (\ref{eq pseudo-polynomial}) were called
pseudo-polynomial functions, and several axiomatizations were given for this
class of functions. Besides axiomatization, another noteworthy problem is the
factorization of such functions: given a function $U\colon L_{1}\times
\cdots\times L_{n}\rightarrow L$, find all factorizations of $U$ in the form
(\ref{eq pseudo-polynomial}). Such a factorization can be useful in real-life
applications, when only the function $U$ is available (from empirical
observations), and an analysis of the behavior of the local utility functions
$\varphi_{i}$ and of the aggregation function $p$ could give valuable
information about the decision maker's attitude.

Suppose that we have already found the local utility functions $\varphi_{i}$
(see \cite{CW3} and \cite{Belfast} for a method to find them), and let
$a_{i}=\varphi_{i}\left(  0_{i}\right)  ,b_{i}=\varphi_{i}\left(
1_{i}\right)  $. If $\mathbf{x}\in L_{1}\times\cdots\times L_{n}$ is such that
$x_{i}=1_{i}$ if $i\in I$ and $x_{i}=0_{i}$ if $i\notin I$, then $U\left(
\mathbf{x}\right)  =p\left(  \widehat{\mathbf{e}}_{I}\right)  $. Thus, knowing
the global utility function $U$, we have information about $p|_{D}$, and we
can use Theorem~\ref{thm all solutions over L} to find all possible lattice
polynomial functions $p$ that can appear in a factorization
(\ref{eq pseudo-polynomial}) of $U$. (Of course, one has to take into account
the other values of $U$ as well, but this can be done by using the boundary conditions.)

\subsubsection*{Acknowledgments.}

The first named author is supported by the internal research project
F1R-MTH-PUL-09MRDO of the University of Luxembourg. The second named author
acknowledges that the present project is supported by the
\hbox{T\'{A}MOP-4.2.1/B-09/1/KONV-2010-0005} program of the National
Development Agency of Hungary, by the Hungarian National Foundation for
Scientific Research under grants no.\ K77409 and K83219, by the National
Research Fund of Luxembourg, and cofunded under the Marie Curie Actions of the
European Commission \hbox{(FP7-COFUND).}

\bigskip

\end{document}